\numberwithin{equation}{section}
\newtheorem{theorem}{Theorem}[section]
\newtheorem{proposition}[theorem]{Proposition}
\newtheorem{lemma}[theorem]{Lemma}
\newtheorem{remark}[theorem]{Remark}
\newtheorem{definition}[theorem]{Definition}
\theoremstyle{definition}
\renewcommand{\epsilon}{\eps}
\newcommand{\R}{{\mathbb R}}
\newcommand{\eps}{\varepsilon}
\newcommand{\pnorm}[2][]{\if #1'' \left|#2\right|_p \else \left|#2\right|_{#1} \fi}
\newcommand{\RR}{\mathbb{R}}
\newcommand{\Hs}{H^s(\RR^n)}
\newcommand{\Ds}[1]{(-\Delta)^{#1}}
\newcommand{\norm}[2]{\lVert{#1}\lVert_{#2}}
\newcommand{\HL}{\Hs\cap L^{\infty}_{c}(\RR^n)}
\renewcommand{\theta}{\vartheta}
\title[Fractional logarithmic NLS]{Fractional logarithmic Schr\"odinger equations}
\author[P.\ d'Avenia]{Pietro d'Avenia}
\address{Dipartimento di Meccanica, Matematica e Management
\newline\indent 
Politecnico di Bari
\newline\indent
Via Orabona 4,  I-70125  Bari, Italy}
\email{\href{mailto:pietro.davenia@poliba.it}{pietro.davenia@poliba.it}}
\author[M.\ Squassina]{Marco Squassina}
\address{Dipartimento di Informatica \newline\indent
Universit\`a degli Studi di Verona
\newline\indent
Strada Le Grazie 15, I-37134 Verona, Italy}
\email{\href{mailto:marco.squassina@univr.it}{marco.squassina@univr.it}}
\author[M.\ Zenari]{Marianna Zenari}
\address{Dipartimento di Matematica \newline\indent
Universit\`a degli Studi di Trento
\newline\indent
Via Sommarive 14, I-38123 Povo (TN), Italy}
\email{\href{mailto:marianna.zenari@univr.it}{marianna.zenari@univr.it}}
\thanks{The first author was partially 
supported by GNAMPA project {\em Aspetti differenziali e geometrici nello studio di problemi
ellittici quasi-lineari}. The work was partially carried out during a stay of 
P.\ d'Avenia  at the University of Verona, Italy. He would like to express his gratitude to the 
Department of Computer Science for the warm hospitality.}
\subjclass[2010]{34K37, 35Q51, 35Q40}
\keywords{Fractional Schr\"odinger equations, multiplicity of solutions, regularity of solutions}
\begin{document}

\begin{abstract}
By means of non-smooth critical point theory
we obtain existence of infinitely many weak solutions of the 
fractional Schr\"odinger equation with logarithmic nonlinearity.\ We also investigate
the H\"older regularity of the weak solutions.
\end{abstract}

\maketitle
%\listoftodos
%\todo[inline]{Gli integrali senza indice sono da intendersi su tutto $\R^n$.}
\section{Introduction}
\noindent
Let $s\in (0,1)$ and $n>2s$. The non-linear fractional logarithmic Schr\"odinger equation 
\begin{equation}
\label{eq:SCH}
{\rm i}\phi_t-\Ds{s}\phi+\phi\log |\phi|^2=0 \qquad\text{in $\RR\times \RR^n$}
\end{equation}
is a generalization of the classical NLS with logarithmic nonlinearity \cite{caz1}. For power type nonlinearities
the fractional Schr\"odinger equation was
derived by Laskin \cite{Lask1,Lask2,Lask3} by replacing the Brownian motion in the path integral approach
with the so called L\'evy flights. Although the equation
\begin{equation}
\label{classic-log}
{\rm i}\phi_t-\Delta\phi+\phi\log |\phi|^2=0 \qquad\text{in $\RR\times \RR^n$}
\end{equation}
has been ruled out as a fundamental quantum wave equation by 
very accurate experiments on neutron diffraction, it is currently under discussion if this equation can be adopted 
as a simplified model for some physical phenomena \cite{birula76,birula79,birula3,zlosh}.\
Its relativistic version, with D'Alembert operator in place of the Laplacian, was first proposed in \cite{rosen} by Rosen.
We refer the reader to \cite{cazNA,caz1,cazhar} 
for existence and uniqueness of solutions of the associated Cauchy problem 
in a suitable functional framework and to a study of orbital stability, with respect to radial perturbations, of the ground state solution.
%
%that includes the derivative of order lower than $2$.
%The fractional quantum mechanics is an extension of a fractality concept in quantum physics. The first use of this idea is due to 
%Feymann and Hibbs that reformulated the nonrelativistic quantum mechanic as a path integral over the Brownian paths.
%Later, Laskin used a generalization of the Brownian motion, the L\'evy flights.
%%, and he formulated the fractional quantum mechanics as a path integral over the path of the L\'evy flights (see \cite{Lask1}, \cite{Lask2}, \cite{Lask3}).
%The path integral approach to the Brownian path leads to the classical Schr\"odinger equation but, extending it to the L\'evy flights the fractional %Schr\"odinger equation is obtained.
Although the fractional Laplacian operator $(-\Delta)^s$, and more generally pseudodifferential operators, have been a classical topic of functional analysis since long ago,
the interest for such operator has constantly increased in the last few years.
Nonlocal operators such as $(-\Delta)^s$ naturally arise
in continuum mechanics, phase transition phenomena,
population dynamics and game theory, as they are the typical outcome of stochastical stabilization of L\'evy processes, see e.g.\ 
the work of Caffarelli \cite{C} and the references therein. \\
In this paper we aim to study the existence of multiple standing waves solutions to \eqref{eq:SCH}, namely
$\phi(t,x)= e^{i\omega t}u(x)$, with $\omega\in \RR$, where $u\in H^s(\R^n)$ solves the semi-linear elliptic problem
\begin{equation}\label{logeq}
\Ds{s}u+\omega u=u\log u^2 \,\,\quad\text{in $\RR^n$.}
\end{equation}
Without loss of generality we can restrict to $\omega>0$, since if $u$ is a solution of \eqref{logeq} then 
$\lambda u$ with $\lambda\neq 0$ is a solution of $\Ds{s}v+(\omega+\log \lambda^2) v=v \log v^2$.
From a variational point of view, equation \eqref{logeq} is formally 
associated with the functional $J$ on $H^s(\R^n)$ defined by
\[
J(u)={1\over 2}\int|\Ds{s/2}{u}|^2+\frac{\omega+1}{2}\int u^2-\frac{1}{2}\int u^2\log u^2.
\]
The fractional Sobolev space $H^s(\R^n)$ (see \cite{DPV}) is continuously embedded in $L^q(\R^n)$ 
for all $2\leq q\leq 2^*_s,$ where $2^*_s:=2n/(n-2s)$
and its closed subspace $H^s_{{\rm rad}}(\R^n)$ is compactly injected in $L^q(\R^n)$ for $2<q<2^*_s$ (see \cite{Pierre}).
%\todo[inline]{parlare di immersioni di Sobolev in generale, poi che per le radiali \`e compatta con definizione di $2^*_s$.}
Furthermore, by the fractional logarithmic Sobolev inequality (see \cite{CotTav}) we have
\begin{equation}
\label{logsob}
\int u^2\log\Big(\frac{u^2}{\|u\|^2_2}\Big)+\Big(n+\frac{n}{s}\log a+\log\frac{s\Gamma(\frac{n}{2})}{\Gamma(\frac{n}{2s})}\Big)\|u\|^2_2\leq 
\frac{a^2}{\pi^s}\norm{\Ds{s/2}{u}}{2}^2,\quad \text{$a>0$},
\end{equation}
for any $u\in H^s(\R^n)$. Whence, it is easy to see that $J$ satisfies this inequality
\begin{equation}
\label{MPG1}
J(u)\geq \frac{1}{2}\Big[\Big(1-\frac{a^2}{\pi^s}\Big)\norm{\Ds{s/2}{u}}{2}^2-\norm{u}{2}^2\log\norm{u}{2}^2+\Big(\omega+1+n+\frac{n}{s}\log a+\log\frac{s\Gamma(\frac{n}{2})}{\Gamma(\frac{n}{2s})}\Big)\norm{u}{2}^2\Big],
\end{equation}
for all $u\in \Hs$ and $a>0$ small. However, there are elements $u\in \Hs$ such that 
$$
\int  u^2\log u^2=-\infty.
$$
Thus, in general, the functional fails to be finite as well as of class  $C^1$. 
On the other hand, is readily seen that $J:H^s(\R^n)\to\R\cup\{+\infty\}$ is lower semi-continuous.
For this reasons we use the non-smooth critical point theory developed by Degiovanni and Zani in \cite{DegZan96,DegZan2000} for suitable classes of lower semi-continuous functionals, which is based on a generalization of the norm of the differential, the weak slope \cite{degmarz}. We say that
 $u\in\Hs$ is a weak solution to \eqref{logeq} if 
\begin{equation}
\label{def-w}
\int \Ds{s/2}{u}\Ds{s/2}{v}+\omega\int uv=\int uv\log u^2,
\,\,\,\quad\text{for all $v\in\HL$}.
\end{equation}
The main result of the paper is the following.

\begin{theorem}
\label{main}
Problem~\eqref{logeq} admits a sequence of weak solutions 
$(u_k)\subset H^s_{{\rm rad}}(\R^n)$  with $J(u_k)\to+\infty$. Furthermore, $u_k\in C^{0,2s+\sigma}(\R^n)$
for $s<1/2$ and $u_k\in C^{1,2s-1+\sigma}(\R^n)$ for $s\geq 1/2$, for some $\sigma\in (0,1).$
\end{theorem}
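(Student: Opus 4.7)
The strategy combines an equivariant non-smooth minimax on the radial subspace with a bootstrap regularity argument.

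\emph{Setup.} I would work in the closed subspace $X:=H^s_{\rm rad}(\RR^n)$, on which $J$ is still even and lower semicontinuous and where the embedding $X\hookrightarrow L^q(\RR^n)$ is compact for every $q\in(2,2^*_s)$; a non-smooth principle of symmetric criticality, applicable in the Degiovanni--Zani framework thanks to the $O(n)$-invariance of \eqref{logeq}, ensures that critical points of $J|_X$ in the weak-slope sense are weak solutions of \eqref{logeq} on the whole space. To handle the singular logarithmic term I decompose $F(t):=t^2\log t^2$ as $F=F_1-F_2$ with $F_1,F_2\geq 0$ of class $C^1$, $F_1$ convex on $\RR$ (and possibly producing $\int F_1(u)=+\infty$) and $F_2$ globally $C^1$ with at most subquadratic growth and bounded derivative. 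Writing $J=J_1-J_2$, the first piece is convex and lsc, the second is $C^1$ on $\Hs$, and the weak slope of $J$ at finite points agrees with the usual dual norm of $J_1'-J_2'$ in the sense of subdifferential calculus.

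\emph{Minimax.} The log-Sobolev inequality \eqref{logsob}, used with $a>0$ so small that $a^2/\pi^s<1$, gives the estimate \eqref{MPG1}, from which I would check the hypotheses of the $\mathbb{Z}_2$-equivariant non-smooth mountain pass of \cite{DegZan96,DegZan2000}: $J(0)=0$, $J$ is even, $J\geq\alpha>0$ on a small sphere of $X$ after absorbing the logarithm by the elementary bound $|t\log t|\leq C_\varepsilon(|t|^{1-\varepsilon}+|t|^{1+\varepsilon})$, and for every finite-dimensional symmetric subspace $V\subset X$ one has $J|_V\to-\infty$ at infinity (the negative sign of $-\int u^2\log u^2$ on an $L^2$-normalised non-vanishing profile dominates the quadratic terms). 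The equivariant deformation lemma for lsc functionals then produces, once the Palais--Smale condition is granted, a sequence of critical values $c_k\to+\infty$ realised by radial critical points $u_k\in X$.

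\emph{Palais--Smale, the main obstacle.} Given $(u_n)\subset X$ with $J(u_n)\leq C$ and weak slope $|dJ|(u_n)\to 0$, \eqref{MPG1} with $a$ small first yields boundedness of $(u_n)$ in $\Hs$. Up to a subsequence $u_n\rightharpoonup u$ in $X$, $u_n\to u$ in $L^q(\RR^n)$ for $2<q<2^*_s$, and a.e. The weak slope characterisation gives $\xi_n\in\hs$ with $\norm{\xi_n}{\hs}\to 0$ and
\[
\int \Ds{s/2}u_n\,\Ds{s/2}v+\omega\int u_n v-\int u_n v\log u_n^2=\langle \xi_n,v\rangle
\]
for every $v\in\HL$. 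Testing (after a truncation to stay inside $\HL$) against $v=u_n-u$, using Fatou on the convex positive part coming from $F_1$ and strong $L^q$ convergence on the subquadratic part coming from $F_2$, one deduces $\norm{\Ds{s/2}(u_n-u)}{2}\to 0$, hence $u_n\to u$ in $X$, and $u$ satisfies \eqref{def-w}. I expect the delicate point here to be the control of the negative logarithmic tail where $u_n$ is small: uniform integrability comes again from the fractional log-Sobolev inequality applied to $u_n-u$, and this is precisely the reason for introducing the splitting $F=F_1-F_2$ in the first place.

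\emph{Regularity.} Fix $u=u_k$. A fractional Moser iteration adapted to the $t^{1+\varepsilon}$-controllable growth of $t\mapsto t\log t^2$ yields $u\in L^\infty(\RR^n)$; since $u\in X$ is radial and belongs to $\Hs\cap L^\infty(\RR^n)$, it decays at infinity, so that $f:=u\log u^2-\omega u$ lies in $L^\infty(\RR^n)\cap C^0(\RR^n)$ and vanishes at infinity. Silvestre's interior regularity theorem for $\Ds{s}u=f$ with $f\in L^\infty$ gives $u\in C^{0,\alpha}(\RR^n)$ for every $\alpha<\min\{1,2s\}$. Then $f=g(u)$ with $g(t)=t\log t^2-\omega t$ Hölder continuous on any compact interval of $\RR$ (the log singularity at $0$ is absorbed by any Hölder exponent $<1$ since $|g(t)|\lesssim|t|^{1-\varepsilon}$ near $0$), so $f\in C^{0,\beta}(\RR^n)$ for some $\beta\in(0,1)$; Schauder-type estimates for $(-\Delta)^s$ then upgrade $u$ to $C^{0,2s+\sigma}(\RR^n)$ when $s<1/2$ and to $C^{1,2s-1+\sigma}(\RR^n)$ when $s\geq 1/2$, for some $\sigma\in(0,1)$, which completes the proof.
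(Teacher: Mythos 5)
Your proposal is correct in outline and reaches the theorem by the same global architecture as the paper (equivariant non-smooth minimax on $H^s_{\rm rad}(\RR^n)$, mountain-pass geometry from the fractional log-Sobolev inequality \eqref{logsob}, compact radial embedding for the Palais--Smale condition, then a bootstrap to H\"older regularity), but it diverges at three technical junctures. First, to tame the logarithm you split $F=F_1-F_2$ with $F_1$ convex and $F_2$ of class $C^1$; this is the Cazenave--Szulkin decomposition which the paper explicitly attributes to \cite{SSz} as an alternative route, whereas the paper keeps the term $f(u)=\frac12\int G(u)$ intact and works with the Degiovanni--Zani subdifferential, proving $\partial(-f)(u)=\{-g(u)-u\}$ (Theorem \ref{DZTh.3.1}) and deducing $\partial J(u)=\{\alpha_u\}$ in Proposition \ref{linkder-pend}. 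Your splitting buys convexity and hence monotonicity/Fatou arguments; the paper's choice buys a clean identification of the derivative against every test function in $\HL$. Second, in the Palais--Smale verification you test against $u_n-u$, while the paper first passes to the limit in the equation using the truncations $\theta_R(u_k)v$ and then concludes norm convergence from the upper semicontinuity $\limsup_k\int u_k^2\log u_k^2\le\int u^2\log u^2$ together with the admissibility of $v=u$ as a test function (legitimate because $(u^2\log u^2)^+\in L^1(\RR^n)$); both are viable, but the paper's route sidesteps the negative logarithmic tail that you yourself flag as the delicate point. Third, for regularity you propose Moser iteration plus Silvestre's interior estimate, whereas the paper runs the Felmer--Quaas--Tan bootstrap with the Bessel kernel $\mathcal K=\mathcal F^{-1}(1/(\omega+|\xi|^{2s}))$, iterating $u_j=\mathcal K*(\eta_j h)\in\mathcal W^{2s,p_j}(\RR^n)$ until $p_j>n/(2s)$; the endgame --- H\"older continuity of $g$ on compacta from $|g'(\xi)|\le C|\xi|^{-\delta}$, followed by Schauder estimates for $\Ds{s}$ --- is identical in both arguments.

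One step of your version is not off-the-shelf: the ``non-smooth principle of symmetric criticality'' in the weak-slope framework. The paper never invokes such a principle; instead, the proof of Proposition \ref{comp-cps} shows directly that the limit of a radial Palais--Smale sequence satisfies \eqref{def-w} against every $v\in\HL$, radial or not, so the critical points produced on $H^s_{\rm rad}(\RR^n)$ are weak solutions on all of $\RR^n$ without any abstract criticality transfer. If you keep your formulation you should either cite a symmetric-criticality result valid for lower semicontinuous functionals in the sense of \cite{degmarz}, or replace it by the paper's direct verification.
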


\noindent
The result extends to the nonlocal case the results obtained in \cite{DavMonSqua} for the existence
of multiple bound states $(u_k)\subset H^1_{{\rm rad}}(\R^n)$ for equation \eqref{classic-log}.\
Furthermore, it provides H\"older regularity of the solutions depending upon the value of $s$, following the strategy
outlined in \cite{FQT}. We point out that,  differently from \cite{FQT}, the nonlinearity $g(t)=t\log t^2$
extended to zero at $t=0$ has a very different behaviour at the origin since $g(t)/t\to-\infty$
in place of $g(t)/t\to 0$ for $t\to 0$, property which also generates, as described above, the loss of smoothness
of the functional $J$ over $H^s(\R^n)$.\ We mention that, in \cite{SSz}, a class
of non-autonomous logarithmic Schr\"odinger equations with $1$-periodic potentials was recently investigated and 
the existence of multiple solutions was obtained by splitting the energy functional into the
sum of a $C^1$ and a convex lower semi-continuous functional and using the critical point 
theory of \cite{sz}.
\vskip3pt
\noindent
The paper is organized as follows.\newline
In Section~\ref{prelim} we collect some preliminary notions and results. \newline In Section~\ref{palais} we prove that the functional
satisfies the Palais-Smale condition in the sense specified in \cite{DegZan2000}. \newline In Section~\ref{proof} we prove the
existence and the H\"older regularity of the radially symmetric weak solutions.
\vskip3pt
\noindent
Throughout the proofs the letter $C$, unless explicitly stated, will always denote a positive constant
whose value may change from line to line.
Moreover, the notation $\int$ will always denote $\int_{\R^n}.$

%\todo[inline]{Frasetta sulle costanti}

%Actually, as shown in Appendix another approach is possible. Indeed, as in

\section{Preliminary results}
\label{prelim}
\noindent
First, for the sake of self-containedness, we recall the definition of fractional Sobolev space and fractional Laplacian.
For any $s\in (0,1)$ the space $H^{s}(\RR^n)$ is defined as
$$
H^{s}(\RR^n):=\left\{ u\in L^2(\RR^n)\ :\ \frac{|u(x)-u(y)|}{|x-y|^{n/2+s}}\in L^2(\RR^{2n})\right\}
$$ 
and it is endowed with the norm
$$
\|u\| :=\left(\int |u|^2 +\int_{\RR^{2n}}\frac{|u(x)-u(y)|^2}{|x-y|^{n+2s}}\right)^{1/2}.
$$
Let $\mathcal{S}$ be the Schwartz space of rapidly decaying $C^\infty$ functions in $\mathbb{R}^n$. We have
%\todo[inline]{$\Hs$ e $\Ds{s}$ sono trattati in maniera diversa}
\begin{definition}
For any $u\in \mathcal{S}$ and $s\in(0,1)$ the fractional Laplacian operator $\Ds{s}$ is defined as
$$
\Ds{s}u(x)=-\frac{1}{2}C(n,s)\int\frac{u(x+y)+u(x-y)-2u(x)}{|y|^{n+2s}},
$$
with
 $$
 C(n,s)=\left(\int\frac{1-\cos\zeta_1}{|\zeta|^{n+2s}} \right)^{-1}.
 $$
\end{definition}
\noindent
For functions $u$ with local H\"older continuous derivatives of exponent $\gamma>2s-1$, the integral
defining $(-\Delta)^s u$ exists finite.
\noindent
Observe that, using  \cite[Proposition 3.6]{DPV}, for every $u,v\in\Hs$ we have that
\begin{align}\label{normeq}
\int\Ds{s/2}u\Ds{s/2}v=\frac{C(n,s)}{2}\int_{\RR^{2n}}\frac{(u(x)-u(y))(v(x)-v(y))}{|x-y|^{n+2s}}.
\end{align}
We now recall some definitions and results of non-smooth critical point theory by Degiovanni and Zani \cite{DegZan2000} (see also the references therein).
\noindent Let $(X,\|\cdot\|_X)$ be a Banach space and 
$f:X\to\bar{\mathbb{R}}$ be a function.  
The (critical point) theory we follow is based on a generalized notion of the norm of the derivative, the weak slope.
First we defined it for continuous functions and then we extended it for all functions.
\begin{definition}
Let $f:X\to\RR$ be continuous and $u\in X$.
Then, $|df|(u)$ is the supremum of the $\sigma$'s in $[0,+\infty)$ such that there exist $\delta>0$ and a continuous map $\mathcal{H}:B_{\delta}(u)\times[0,\delta]\to X$, satisfying
$$d(\mathcal{H}(w,t),w)\leq t,\quad f(\mathcal{H}(w,t))\leq f(w)-\sigma t,$$
whenever $w\in B_{\delta}(u)$ and $t\in[0,\delta]$.
\end{definition}
\noindent Now, we define the function $\mathcal{G}_f: \operatorname{epi}(f) \mapsto \RR$, where $
\operatorname{epi}(f):=\left\{ (u,\lambda)
\in X\times\mathbb{R}\; \vert \; f(u)\leq\lambda\right\}$, by $\mathcal{G}_f(u,\lambda)=\lambda$. 
If on $X\times\mathbb{R}$, we consider the norm 
$\|\cdot\|_{X\times\mathbb{R}}=(\|\cdot\|_X^2 + |\cdot|^2)^{1/2}$ 
and we denote with $B_\delta(u,\lambda)$ the open ball of 
center $(u,\lambda)$ and radius $\delta>0$, we have that the function  $\mathcal{G}_f$ is continuous and Lipschitzian of 
constant $1$ and it allows to generalize the notion of weak slope for non-continuous functions $f$ as follows (see \cite[Proposition 2.3]{CamDeg}).

\begin{proposition}
For all $u\in X$ with $f(u)\in\mathbb{R}$ we have
\begin{equation*}
%\label{eq:ws}
|df|(u)=
\begin{cases}
\displaystyle\frac{|d\mathcal{G}_f|(u,f(u))}
{\sqrt{1-|d\mathcal{G}_f|(u,f(u))^2}}
&
\hbox{if } |d\mathcal{G}_f|(u,f(u))<1,\\
+\infty &
\hbox{if } |d\mathcal{G}_f|(u,f(u))=1. 
\end{cases}
\end{equation*}
\end{proposition}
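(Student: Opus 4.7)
The statement has two faces: when $f$ is continuous, the left-hand side $|df|(u)$ is already defined directly, and the proposition asserts that it agrees with the right-hand expression built from $\mathcal{G}_f$; when $f$ is only lower semi-continuous, the formula serves to \emph{define} $|df|(u)$, and what has to be verified is that for continuous $f$ the two notions coincide. So the plan is to prove the two inequalities
\[
|df|(u)\ \geq\ \frac{|d\mathcal{G}_f|(u,f(u))}{\sqrt{1-|d\mathcal{G}_f|(u,f(u))^2}},\qquad |df|(u)\ \leq\ \frac{|d\mathcal{G}_f|(u,f(u))}{\sqrt{1-|d\mathcal{G}_f|(u,f(u))^2}},
\]
for continuous $f$, and then treat the boundary case $|d\mathcal{G}_f|(u,f(u))=1$ separately. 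The whole argument is a geometric trade-off between horizontal (in $X$) and vertical (in $\mathbb{R}$) displacement, governed by the Pythagorean identity on $X\times\mathbb{R}$.

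For the first inequality, fix $\sigma<|d\mathcal{G}_f|(u,f(u))$ and pick, by the definition of weak slope for $\mathcal{G}_f$, a continuous deformation $\mathcal{K}:(B_\delta(u,f(u))\cap\operatorname{epi}(f))\times[0,\delta]\to\operatorname{epi}(f)$ with $d(\mathcal{K}((w,\mu),\tau),(w,\mu))\leq\tau$ and $\mathcal{G}_f(\mathcal{K}((w,\mu),\tau))\leq\mu-\sigma\tau$. Writing $\mathcal{K}=(\mathcal{K}_1,\mathcal{K}_2)$ and using continuity of $f$, I would set
\[
\mathcal{H}(w,t)\ :=\ \mathcal{K}_1\!\left((w,f(w)),\,t/\sqrt{1-\sigma^2}\right).
\]
The estimate $\|\mathcal{K}_1-w\|_X^2+(\mathcal{K}_2-f(w))^2\leq \tau^2$ combined with $\mathcal{K}_2-f(w)\leq-\sigma\tau$ forces $\|\mathcal{K}_1-w\|_X\leq\tau\sqrt{1-\sigma^2}$, so $\mathcal{H}$ moves at $X$-speed at most $1$; meanwhile $f(\mathcal{H}(w,t))\leq\mathcal{K}_2\leq f(w)-\sigma t/\sqrt{1-\sigma^2}$, giving the stated lower bound after sending $\sigma\uparrow|d\mathcal{G}_f|$.

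For the reverse inequality, fix $\tau<|df|(u)$ and a deformation $\mathcal{H}:B_\delta(u)\times[0,\delta]\to X$ realizing it. The natural ansatz on the epigraph is
\[
\mathcal{K}((w,\mu),t)\ :=\ \bigl(\mathcal{H}(w,\alpha t),\ \mu-\beta t\bigr),
\]
with $\alpha,\beta\geq 0$ to be tuned. The three requirements — that $\mathcal{K}$ land in $\operatorname{epi}(f)$, that the combined displacement be $\leq t$, and that the $\lambda$-component drop by at least (slope)$\cdot t$ — become $\beta\leq\tau\alpha$, $\alpha^2+\beta^2\leq 1$, and maximising $\beta$ under these constraints yields $\alpha=1/\sqrt{1+\tau^2}$, $\beta=\tau/\sqrt{1+\tau^2}$. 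This gives $|d\mathcal{G}_f|(u,f(u))\geq\tau/\sqrt{1+\tau^2}$, a relation equivalent, after letting $\tau\uparrow|df|(u)$ and inverting, to the desired upper bound. The case $|d\mathcal{G}_f|(u,f(u))=1$ is then handled by the same construction: arbitrarily large $\tau$ are admissible, and one reads off $|df|(u)=+\infty$.

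The delicate point — and the one to cite carefully from \cite{CamDeg} — is the continuity of the auxiliary maps $\mathcal{H}$ and $\mathcal{K}$: in the first inequality the map $w\mapsto(w,f(w))$ must be continuous, which is exactly the hypothesis that $f$ be continuous (for lower semi-continuous $f$ this step fails, which is why in that generality the formula must be taken as a definition); in the second inequality one must shrink $\delta$ so that $(w,\mu)\in B_\delta(u,f(u))\cap\operatorname{epi}(f)$ forces $w\in B_{\delta'}(u)$, and keep $\alpha t\leq\delta$. Both are routine once the geometric identity above is in place, so the real content of the proposition lies in the clean Pythagorean balance between the $X$-step and the $\mathbb{R}$-step.
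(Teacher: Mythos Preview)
The paper does not actually prove this proposition: it is stated as a recalled fact with the citation ``(see \cite[Proposition 2.3]{CamDeg})'' and no argument is given. So there is no proof in the paper to compare against.

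Your argument is correct and is essentially the standard proof of this fact (and presumably close to what appears in \cite{CamDeg}). The two directions are handled exactly as they should be: from an epigraph deformation $\mathcal K$ one extracts an $X$-deformation by composing with the graph map $w\mapsto(w,f(w))$ and rescaling time by $1/\sqrt{1-\sigma^2}$; conversely, from an $X$-deformation $\mathcal H$ one builds an epigraph deformation by pairing $\mathcal H(w,\alpha t)$ with the linear drop $\mu-\beta t$, and the optimal choice $\alpha=1/\sqrt{1+\tau^2}$, $\beta=\tau/\sqrt{1+\tau^2}$ falls out of the constraints. Your treatment of the boundary case $|d\mathcal G_f|=1$ is also right, and you correctly flag that the first direction requires continuity of $f$ (so that $w\mapsto(w,f(w))$ is continuous), which is precisely why for merely lower semi-continuous $f$ the formula must be taken as a definition rather than a theorem.

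One small cosmetic remark: in the second direction you should also note that continuity of $f$ guarantees that points $(w,\mu)$ in a sufficiently small $X\times\mathbb R$-ball around $(u,f(u))$ with $f(w)\le\mu$ automatically have $w$ close to $u$, so that $\mathcal H(w,\alpha t)$ is defined; you mention this at the end, but it belongs in the construction of $\mathcal K$.
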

\noindent This equivalent definition allows us to study the continuous function $\mathcal{G}_f$ instead of the function $f$.
In some cases it is also useful the notion of equivariant weak slope.
\begin{definition}
Let $f$ be even with $f(0)\in\mathbb{R}$. For every $\lambda\geq f(0)$, we denote 
$|d_{\mathbb{Z}_2}\mathcal{G}_f|(0,\lambda)$ the supremum of the $\sigma$'s 
in  $[0,+\infty[$ such that there exist $\delta>0$ and a continuous map
$\mathcal{H}=(\mathcal{H}_1,\mathcal{H}_2):(B_\delta(0,\lambda)\cap\operatorname{epi}(f))\times [0,\delta] \to \operatorname{epi}(f)$,
satisfying
\begin{align*}
\| \mathcal{H}((w,\mu),t) - (w,\mu)\|_{X\times\mathbb{R}} \leq t,
\quad
\mathcal{H}_2((w,\mu),t)\leq \mu - \sigma t,
\quad
\mathcal{H}_1((-w,\mu),t)=-\mathcal{H}_1((w,\mu),t),
\end{align*}
whenever $(w,\mu)\in B_\delta(0,\lambda)\cap\operatorname{epi}(f)$ 
and $t\in [0,\delta]$.
\end{definition}
\noindent Then we can give the following
\begin{definition}
Let $c\in\mathbb{R}$. The function $f$ satisfies (epi)$_c$ condition if there exists $\varepsilon>0$ such that
\[
\inf\{|d\mathcal{G}_f|(u,\lambda)\;\vert\; f(u)<\lambda,|\lambda-c|<\varepsilon\}>0.
\]
\end{definition}
\noindent In this framework we have the following definitions.
\begin{definition}
$u\in X$ is a (lower) critical point of $f$ 
if $f(u)\in\mathbb{R}$ and $|df|(u)=0$.
\end{definition}
\begin{definition}
Let $c\in\mathbb{R}$.\ A sequence $\{u_k\}\subset X$ 
is a Palais-Smale 
sequence for $f$ at level $c$ if $f(u_k)\to c$ and
$|df|(u_k) \to 0$.
Moreover $f$ satisfies the Palais-Smale 
condition at level $c$ if every Palais-Smale 
sequence for $f$ at level $c$ admits a 
convergent subsequence in $X$. 
\end{definition}
\noindent We will apply the following abstract result (see \cite[Theorem 2.11]{DegZan2000}) that is an adaptation of the classical theorem of Ambrosetti-Rabinowitz.
\begin{theorem}
\label{astratto}
Let $X$ be a Banach space 
and $f:X\to\bar{\mathbb{R}}$ a lower semi-continuous even functional.
Assume that $f(0)=0$ and there exists a  strictly increasing sequence $\{V_k\}$ of finite-dimensional subspaces of $X$ with the following properties:
\begin{enumerate}
\item \label{it:G1} there exist a closed subspace $Z$ of $X$, $\rho >0$ and $\alpha>0$ such that $X=V_0\oplus Z$ and for every $u\in Z$ with $\|u\|_X =\rho$, $f(u)\geq\alpha$;
\item \label{it:G2} there exists a sequence $\{R_k\}\subset]\rho,+\infty[$ such that for any $u\in V_k$ with $\|u\|_X \geq R_k$, $f(u)\leq 0$;
\item  for every $c\geq\alpha$, the function $f$ satisfies the Palais-Smale condition at level $c$ and (epi)$_c$ condition;
\item \label{it:WSH} $|d_{\mathbb{Z}_2}\mathcal{G}_f|(0,\lambda)\neq 0$, whenever $\lambda\geq\alpha$.
\end{enumerate} 
Then there exists a sequence $\{u_k\}$ of critical points of $f$ such that $f(u_k)\to+\infty$.
\end{theorem}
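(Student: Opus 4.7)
The plan is to adapt the symmetric mountain--pass scheme of Ambrosetti--Rabinowitz to this lower semi-continuous, $\bar{\mathbb{R}}$-valued setting, working with the epigraph and the weak--slope calculus of \cite{DegZan2000}. Because $f$ may fail to be continuous or finite, I would carry out the deformation arguments on the Lipschitz function $\mathcal{G}_f(u,\lambda)=\lambda$ defined on $\operatorname{epi}(f)\subset X\times\R$, and translate back via the identity relating $|df|(u)$ and $|d\mathcal{G}_f|(u,f(u))$ stated in the preliminary proposition. All equivariance considerations are handled on $X\times\R$ with the $\Z_2$-action $(u,\lambda)\mapsto(-u,\lambda)$.

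Setting $D_k=\{u\in V_k:\|u\|_X\leq R_k\}$ and
\[
\Gamma_k=\set{\eta(D_k)\;:\;\eta\in C(X,X)\text{ odd},\ \eta=\mathrm{id}\text{ on }\partial D_k},\qquad c_k=\inf_{A\in\Gamma_k}\sup_{u\in A}f(u),
\]
I would first verify $\alpha\leq c_k\leq c_{k+1}<+\infty$. Monotonicity follows from $V_k\subset V_{k+1}$, the upper bound from choosing $\eta=\mathrm{id}$ together with hypothesis \eqref{it:G2}, and the lower bound from hypothesis \eqref{it:G1} via a Krasnoselski-genus linking argument: for $k$ larger than $\dim V_0$, any odd continuous $\eta$ equal to the identity on $\partial D_k$ must send some point of $D_k$ into $Z\cap\{\|u\|_X=\rho\}$, so that $\sup_{\eta(D_k)}f\geq\alpha$.

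Next, I would show $c_k\to+\infty$ by a genus contradiction. If some subsequence stayed in an interval $[\alpha,M]$, a pseudo-index argument (using that the equality $c_k=c_{k+j}$ forces $j$-fold critical sets at the same level) would make the symmetric critical set $K_{[\alpha,M]}=\{u\in X:|df|(u)=0,\ f(u)\in[\alpha,M]\}$ have infinite $\Z_2$-genus; the Palais--Smale condition on each level, combined with (epi)$_c$ and a standard covering argument, forces $K_{[\alpha,M]}$ to be compact and hence of finite genus, a contradiction. To conclude that each $c_k$ is attained as a critical value, I would invoke the non-smooth equivariant deformation lemma: if $c_k$ were regular, Palais--Smale plus (epi)$_{c_k}$ would yield a uniform positive lower bound on $|d\mathcal{G}_f|$ in a strip of levels near $c_k$; patching the local Lipschitz contractions $\mathcal{H}$ from the weak-slope definition (symmetrized by odd reflection, with hypothesis \eqref{it:WSH} handling a neighbourhood of $0$ on levels $\lambda\geq\alpha$), one would build an odd continuous map $\eta_0\in C(X,X)$ that fixes $\partial D_k$ and satisfies $\eta_0(\{f\leq c_k+\eps\})\subset\{f\leq c_k-\eps\}$. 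Replacing an almost-optimal $A=\eta(D_k)\in\Gamma_k$ by $\eta_0\circ\eta(D_k)\in\Gamma_k$ then contradicts the definition of $c_k$.

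The main obstacle is constructing the equivariant, epi-constrained deformation in this non-smooth framework. The local contractions $\mathcal{H}$ provided by the weak-slope definition are neither odd nor canonically aligned, and gluing them into a single odd continuous map on $X$ requires a careful partition-of-unity argument on $\operatorname{epi}(f)\subset X\times\R$, followed by projection back to $X$ while preserving the epigraph constraint and the $\Z_2$-symmetry. Hypothesis \eqref{it:WSH} is precisely what permits the symmetrization near the origin: without a non-trivial equivariant weak slope at $(0,\lambda)$ for $\lambda\geq\alpha$, the odd-reflection procedure could collapse a neighbourhood of $0$ and destroy the genus preservation on which the whole min--max scheme rests.
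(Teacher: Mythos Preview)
The paper does not prove this theorem at all: it is stated without proof and attributed to Degiovanni--Zani, specifically \cite[Theorem~2.11]{DegZan2000}. So there is no ``paper's own proof'' to compare against; the result is imported as a black box, and the remainder of the paper is devoted only to verifying that the functional $J$ satisfies hypotheses \eqref{it:G1}--\eqref{it:WSH}.

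That said, your sketch is a reasonable outline of how the cited result is actually established in the non-smooth framework, and you correctly identify the two substantive difficulties: building a $\Z_2$-equivariant deformation on $\operatorname{epi}(f)$ from the local data in the weak-slope definition, and the role of hypothesis~\eqref{it:WSH} in allowing symmetrization near the origin. One point to tighten: your class $\Gamma_k$ is defined via odd continuous maps on $X$, but when $f$ is merely lower semi-continuous and $\bar{\R}$-valued the deformation must genuinely be carried out on $\operatorname{epi}(f)$ (with the $\Z_2$-action $(u,\lambda)\mapsto(-u,\lambda)$) and only afterwards projected; the min--max classes in \cite{DegZan2000} are formulated accordingly rather than directly on $X$. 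Also, the linking lower bound $c_k\geq\alpha$ should hold for every $k$ (since $\dim V_k>\dim V_0$ for $k\geq 1$ by strict monotonicity), not only for $k$ ``large enough''. These are refinements rather than gaps, and the overall architecture you propose matches the standard argument behind the cited theorem.
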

\noindent Of course, here we need to review some theorems in \cite{DegZan2000} for the space $\Hs$.
The following result is useful to prove that our functional satisfies the hypothesis of Theorem \ref{astratto}. We know that $\HL$ is dense in $\Hs$.
Now we prove that every function in $\Hs$ can be seen as the limit of a particular sequence in $\HL$.

\begin{lemma}\label{lem5}
For every $v\in \Hs$ there exists a sequence $\{v_k\}$ in $\HL$ strongly convergent to 
$v$ in $\Hs$ with $-v^{-}\leq v_k\leq v^{+}$ a.e. in $\mathbb{R}^n$.
\end{lemma}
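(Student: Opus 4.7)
My plan is to produce $v_k$ by combining a standard truncation (to gain $L^\infty$) with a smooth cutoff (to gain compact support), and then extract a diagonal sequence. The ordering matters only insofar as both operations must preserve the pointwise bound $-v^-\le\cdot\le v^+$, which, conveniently, they both do.

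\textbf{Step 1: Truncation.} Let $T_M(t):=\max(-M,\min(t,M))$ and set $w_M:=T_M(v)$. Since $T_M$ is $1$-Lipschitz, $w_M\in \Hs\cap L^\infty(\RR^n)$ with $\|w_M\|_{\Hs}\le \|v\|_{\Hs}$. A direct check on the cases $v\ge 0$ and $v<0$ shows $-v^-\le w_M\le v^+$ a.e. For convergence, $w_M\to v$ a.e.\ and in $L^2$ by dominated convergence; for the Gagliardo seminorm, the integrand
\[
\frac{|(w_M(x)-w_M(y))-(v(x)-v(y))|^2}{|x-y|^{n+2s}}
\]
tends to $0$ a.e.\ and is dominated (via the $1$-Lipschitz property of $T_M$) by $4|v(x)-v(y)|^2/|x-y|^{n+2s}\in L^1(\RR^{2n})$, so $w_M\to v$ in $\Hs$.

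\textbf{Step 2: Smooth cutoff.} Fix $\chi\in C_c^\infty(\RR^n)$ with $0\le\chi\le 1$, $\chi\equiv 1$ on $B_1$, $\supp\chi\subset B_2$, and set $\chi_R(x):=\chi(x/R)$. For $w:=w_M$, I will show $\chi_R w\to w$ in $\Hs$ as $R\to\infty$. The $L^2$ part is again dominated convergence. For the seminorm I expand
\[
(\chi_R(x)-1)w(x)-(\chi_R(y)-1)w(y)=(\chi_R(x)-1)(w(x)-w(y))+(\chi_R(y)-\chi_R(x))w(y),
\]
whence
\[
[\chi_R w-w]_s^2\le 2I_R+2 II_R,
\]
with $I_R:=\int\!\int (1-\chi_R(x))^2|w(x)-w(y)|^2|x-y|^{-n-2s}$ and $II_R:=\int w(y)^2 G_R(y)\,dy$, where $G_R(y):=\int|\chi_R(x)-\chi_R(y)|^2|x-y|^{-n-2s}\,dx$. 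The term $I_R\to 0$ by dominated convergence. For $II_R$, scaling $x=R\xi$, $y=R\eta$ yields $G_R(y)=R^{-2s}G_1(y/R)$, and $G_1\in L^\infty(\RR^n)$ (split the inner integral at $|\xi-\eta|=1$: Lipschitz bound on $\chi$ handles the small-distance part since $s<1$, while $\|\chi\|_\infty$ plus integrability of $|\xi-\eta|^{-n-2s}$ handles the far part). Hence $II_R\le R^{-2s}\|G_1\|_\infty\|w\|_2^2\to 0$.

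\textbf{Step 3: Diagonal extraction and bounds.} Given $k\in\N$, by Step~1 choose $M_k$ with $\|w_{M_k}-v\|_{\Hs}<1/(2k)$, and then by Step~2 choose $R_k$ with $\|\chi_{R_k}w_{M_k}-w_{M_k}\|_{\Hs}<1/(2k)$. Define $v_k:=\chi_{R_k}w_{M_k}$. Then $v_k\in \HL$ and $\|v_k-v\|_{\Hs}<1/k$. For the pointwise bound, note $w_{M_k}$ has the same sign as $v$ a.e., so when $v\ge 0$ one has $0\le v_k=\chi_{R_k}w_{M_k}\le w_{M_k}\le v^+$ and $v_k\ge 0\ge -v^-$; when $v<0$ one has $v_k\le 0\le v^+$ and $v_k=\chi_{R_k}w_{M_k}\ge w_{M_k}\ge -v^-$, completing the proof.

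The only slightly delicate point is bounding $II_R$, i.e.\ controlling the contribution of the cutoff to the nonlocal seminorm; everything else is standard Lipschitz/dominated convergence machinery once the right expansion is in place.
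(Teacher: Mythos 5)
Your proof is correct. It shares with the paper the first move --- a truncation in the \emph{value} of $v$ that preserves the constraint $-v^{-}\leq\cdot\leq v^{+}$ (the paper uses the multiplier $\theta_k(v)\,v$ with a Lipschitz cutoff $\theta_k$ of the real line, you use the hard truncation $T_M(v)$; these are interchangeable and both controlled by the same $1$-Lipschitz/dominated-convergence argument) --- but it diverges on how compact support is produced. The paper obtains it by clamping smooth approximants, i.e.\ by passing to $\max\{\min\{\varphi_j,v^{+}\},-v^{-}\}$ with $\varphi_j\in C^\infty_c(\RR^n)$ converging to $v$ in $\Hs$; this is short but leans implicitly on the stability of the lattice operations $\max$ and $\min$ in $H^s(\RR^n)$, which is not spelled out there. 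You instead multiply by a spatial cutoff $\chi_R$ and estimate the resulting Gagliardo error directly; the key point, which you identify and handle correctly, is the commutator term $II_R=\int w^2\,G_R$ with the scaling identity $G_R(y)=R^{-2s}G_1(y/R)$ and $G_1\in L^\infty(\RR^n)$, giving $II_R\leq R^{-2s}\|G_1\|_\infty\|w\|_2^2\to 0$. Your route is more self-contained (everything reduces to dominated convergence plus one explicit computation), slightly longer, and has the minor structural advantage that Step~2 only uses $w\in\Hs$, so the two operations commute in the argument; the diagonal extraction and the verification that both operations preserve the pointwise bounds are carried out correctly.
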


\begin{proof}
Assume first $v\in \HL$.
Let $\theta_k:\RR\rightarrow[0,1]$ in $C^{0,1}$ with Lipschitz constant $\lambda_k=C/k$, $\operatorname{supt}(\theta_k)\subset[-2k,2k]$, $\theta_k(s)=1$ on $[-k,k]$. Let us set $v_k:=\theta_k(v) v$. Then, observe that
$v_k(x)\to v(x)$
%, $\theta_k(u(x))v(x)-\theta_k(u(y))v(y)\rightarrow v(x)-v(y)$ 
as $k\to\infty$
and  $-v^{-}\leq v_k\leq v^{+}$ a.e.\ in $\mathbb{R}^n$. We have
$|v_k(x)|\leq |v(x)|$ and
\begin{align*}
|v_k(x)-v_k(y)|^2
&= |(\theta_k(v(x))-\theta_k(v(y)))v(x)+(v(x)-v(y))\theta_k(v(y))|^2\\
%&\leq 2(|(\theta_k(u(x))-\theta_k(u(y)))v(x)|^2+|\theta_k(u(y))(v(x)-v(y))|^2)\\
&\leq 2(C|v(x)-v(y)|^2\norm{v}{\infty}^2+|v(x)-v(y)|^2)\leq C|v(x)-v(y)|^2.
\end{align*}
Whence $v_k\in \HL$ and,
by Lebesgue's Theorem $v_k\rightarrow v$ in $\Hs$. The general case boils down
to the previous case by arguing on $\max\{\min\{\varphi_j,v^+\},-v^-\}$ in place of $v$, where, by density, $\varphi_j\in C^\infty_c(\R^n)$  converges strongly to $v$ in $H^s(\R^n)$.
\end{proof}

\begin{remark}\rm
Arguing as in the proof of Lemma \ref{lem5}, we can get that, for every $u\in H^s_{\rm loc}(\RR^n)$,
$$
H^s_{\rm loc}(\RR^n):=\{u\in L^2_{\rm loc}(\RR^n):\|\Ds{s/2}u\|_2<+\infty\},
$$ 
and $v\in \Hs$, there exists a sequence $\{v_k\}\subset V_u$, 
$$
V_u :=\{w\in \Hs\cap L^\infty_c(\R^n):u\in L^\infty(\{x\in\R^n: w(x)\neq 0\})\},
$$ 
strongly convergent to $v$ in $\Hs$ with $-v^{-}\leq v_k\leq v^{+}$ a.e.\ (see also \cite[Theorem 2.3]{DegZan96}).
\end{remark}
%\vskip2pt
%\noindent
%For all $u\in \Hs$ let us denote by 
\noindent 
Usually, it is not easy to compute the weak slope of a function. 
Thus, it is often useful to work with a subdifferential, for which calculus rules hold. 
\begin{definition}
For all $u\in X$ with $f(u)\in\RR$, $v\in X$ and $\epsilon>0$, we denote by $f^0_{\epsilon}$ the infimum of $r\in\RR$  such that 
there exists $\delta>0$ and a continuous map
$$
\mathcal{V}:(B_{\delta}(u,f(u))\cap \operatorname{epi}(f))\times ]0,\delta]\to B_{\epsilon}(v),
$$
such that
$$
f(w+t\mathcal{V}((w,\mu t))\leq \mu+rt,
$$
whenever $(w,\mu)\in B_{\delta}(u,f(u))\cap \operatorname{epi}(f)$ and $t\in]0,\delta].$ Then we define
$$f^0(u;v):=\sup_{\epsilon >0}f^0_{\epsilon}(u,v).$$
\end{definition} 
\noindent
As shown in \cite[Corollary 4.6]{CamDeg}, the function $f^0(u;\cdot)$ is convex, lower semicontinuous and positively homogeneous of degree 1.
We can now state the definition of the aforementioned subdifferential.
%\todo[inline]{Dire che questo è il sottodifferenziale di cui parliamo sopra}
\begin{definition}
For all $u\in X$ with $f(u)\in\RR$ we define
$$\partial f(u)=\{\alpha\in X' : \langle\alpha,v\rangle\leq f^0(u;v), \ \forall v\in X\}.$$
\end{definition}
 
\noindent
Now, let us define the continuous functions
\[
g(s):=
\begin{cases}
s\log s^2 & s\neq 0\\
0 & s=0
\end{cases}
\qquad\hbox{and}\qquad
G(s):=
\begin{cases}
s^2\log s^2& s\neq 0\\
0 & s=0
\end{cases}
\]
and let 
\begin{equation}
\label{f-deff}
f(u):=\frac{1}{2}\int G(u) dx.
\end{equation} 
Note that 
$$
G(s)=2\int_0^s (g(t)+t) dt.
$$
We have the following preliminary result. 

\begin{proposition}\label{convergzz}
	If $u\in H^s_{\rm loc}(\RR^n)$ we have that:
	\begin{enumerate}
		\item \label{en1}for every $v\in \HL$, $g(u)v\in L^1(\RR^n)$;
		\item \label{en2}let $v \in \Hs$ and assume that $(g(u)v)^{+}\in L^1(\RR^n)$ or $(g(u)v)^{-}\in L^1(\RR^n)$, then there exists a sequence $\{v_k\}$ in $\HL$ strongly convergent to $v$ in $\Hs$ with
		$$\lim_{k\rightarrow\infty}\int g(u)v_k=\int g(u)v.$$
	\end{enumerate} 
\end{proposition}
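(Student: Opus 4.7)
The plan is to exploit the slow growth of $g$ near zero and at infinity, together with the local $L^2$ integrability of $u$, treating the two statements separately.

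For part \eqref{en1} the key input is the elementary bound
\[
|g(s)|\leq C_\eta\bigl(|s|^{1-\eta}+|s|^{1+\eta}\bigr),\qquad s\in\R,
\]
valid for every $\eta\in(0,1)$: indeed, $|s|^\eta|\log s^2|$ is bounded on $\{|s|\leq 1\}$, while $|s|^{-\eta}\log s^2$ is bounded on $\{|s|\geq 1\}$. Since $v\in\HL$ has compact support, say in a ball $B_R$, and $u\in L^2(B_R)$ by the definition of $H^s_{\rm loc}(\R^n)$ recorded in the remark following Lemma \ref{lem5}, H\"older's inequality yields
\[
\int|g(u)v|\leq\|v\|_\infty\int_{B_R}\bigl(|u|^{1-\eta}+|u|^{1+\eta}\bigr)\leq C\|v\|_\infty\bigl(|B_R|^{\frac{1+\eta}{2}}\|u\|_{L^2(B_R)}^{1-\eta}+|B_R|^{\frac{1-\eta}{2}}\|u\|_{L^2(B_R)}^{1+\eta}\bigr)<+\infty,
\]
which settles \eqref{en1}.

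For part \eqref{en2} I may assume, by symmetry (replacing $v$ by $-v$ if needed and using $g(-s)=-g(s)$), that $(g(u)v)^-\in L^1(\R^n)$. Applying the remark following Lemma \ref{lem5} with the given $u$, I obtain a sequence $\{v_k\}\subset V_u\subset\HL$ with $v_k\to v$ strongly in $\Hs$ and $-v^-\leq v_k\leq v^+$ a.e.; each $g(u)v_k$ is in $L^1(\R^n)$ thanks to \eqref{en1}. Passing to a subsequence I can assume $v_k\to v$, and therefore $g(u)v_k\to g(u)v$, a.e. The pointwise sandwich forces $v_k(x)$ to share the sign of $v(x)$ (or to vanish) and $|v_k|\leq|v|$, so $g(u)v_k$ and $g(u)v$ have the same sign wherever they do not vanish, with $|g(u)v_k|\leq|g(u)v|$; this gives the pointwise domination
\[
\bigl(g(u)v_k\bigr)^\pm\leq\bigl(g(u)v\bigr)^\pm\qquad\text{a.e.\ in }\R^n.
\]

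Lebesgue's dominated convergence theorem applied to $(g(u)v_k)^-$, with integrable majorant $(g(u)v)^-$, delivers $\int(g(u)v_k)^-\to\int(g(u)v)^-$. For the positive parts Fatou's lemma, combined with the same pointwise bound, produces
\[
\int(g(u)v)^+\leq\liminf_k\int(g(u)v_k)^+\leq\limsup_k\int(g(u)v_k)^+\leq\int(g(u)v)^+
\]
in $[0,+\infty]$, so $\int(g(u)v_k)^+\to\int(g(u)v)^+$ as well. Subtracting the two limits gives $\int g(u)v_k\to\int g(u)v$, where the identity is understood in the extended sense when $\int(g(u)v)^+=+\infty$. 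The main subtlety I expect is the careful bookkeeping of the sign/domination step and the treatment of the possibly non-integrable positive part; once these are handled, the argument reduces to a routine H\"older/dominated convergence/Fatou combination.
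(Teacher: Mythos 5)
Your proof is correct and follows essentially the same route as the paper's: part (1) via the polynomial bound $|g(s)|\le C_\eta(|s|^{1-\eta}+|s|^{1+\eta})$ together with local integrability of $u$, and part (2) via the sandwiched approximating sequence of Lemma \ref{lem5} combined with Fatou and dominated convergence. The only cosmetic difference is that you reduce to the case $(g(u)v)^-\in L^1(\RR^n)$ and treat positive and negative parts separately, while the paper works with $(g(u)v)^+\in L^1(\RR^n)$ and applies a reverse-Fatou bound to the whole integrand $g(u)v_k\le (g(u)v)^+$; both arguments rest on the same domination coming from $-v^-\le v_k\le v^+$.
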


\begin{proof}
	If $v\in \HL$, for $\delta\in (0,\frac{N+2s}{N-2s})$, we have
	\begin{align*}
		\int|g(u)v| &\leq\norm{v}{\infty}\Big(\int_{{\rm spt}(v)\cap{|u|\leq1}}|g(u)|+\int_{{\rm spt}(v)\cap{|u|>1}}|g(u)|\Big) \\
		&\leq C\Big(1+\int_{{\rm spt}(v)\cap{|u|>1}}|u|^{1+\delta}\Big)<+\infty,
	\end{align*}
	then we have (\ref{en1}).
	To prove (\ref{en2}) we argue as in \cite[Theorem 2.7]{DegZan96}. Let us assume for instance that $(g(u)v)^{+}\in L^1(\RR^n)$ (if $(g(u)v)^{-}\in L^1(\RR^n)$ the proof is similar). By Lemma \ref{lem5}, there is a sequence $\{v_k\}$ in $\HL$ such that $v_k\rightarrow v$ in $\Hs$ and $-v^{-}\leq v_k\leq v^{+}$ a.e. in $\RR^n$ and, by (\ref{en1}), for every $k$, $g(u)v_k\in L^1(\RR^n)$. But
	\[
	g(u)v_k=g(u)^+ v_k - g(u)^- v_k\leq g(u)^+ v^+ + g(u)^- v^- = (g(u)v)^+\in L^1(\RR^n)
	\]
	and by Fatou's Lemma we have
	$$\limsup_{k}\int g(u)v_k\leq \int g(u)v.$$
	Hence, if $\int g(u)v=-\infty$ we conclude, otherwise we have that $g(u)v\in L^1(\RR^n)$ since 
	\[
	\int |g(u)v|
	=\int (g(u)v)^+ + \int (g(u)v)^-
	=2\int (g(u)v)^+ - \int g(u)v,
	\]
	and $|g(u)v_k|\leq|g(u)v|$. Thus, by Lebesgue's Theorem we conclude.
	%{\color{red}By Lemma \ref{lem5} there is a sequence $\{v_k\}$ in $\HL$ such that $v_k\rightarrow v$ in $\Hs$ and $-v^{-}\leq v_k\leq v^{+}$ a.e.
	%Then 
	%$$(g(u)v)^+=g(u)^-v^-+g(u)^+v^+\geq g(u)^-v_k+g(u)^+v_k$$ and thank to Fatou's Lemma we have
	%$$\limsup_{k\rightarrow\infty}\int g(u)v_k\leq \int g(u)v\leq\int(g(u)v)^+<\infty.$$
	%If $\int g(u)v=-\infty$ we are done, otherwise we have $|g(u)v_k|\leq|g(u)v|\in L^1$ and by Lebesgue's Theorem we have
	%$$\lim_{k\rightarrow\infty}\int g(u)v\leq \int g(u)v.$$ The prove is analogous to the case $g(u)v^-\in L^1(\RR^n)$.}
\end{proof}
\noindent
Moreover we have the following theorem, whose proof is the same of \cite[Theorem 3.1]{DegZan2000}.
	\begin{theorem}\label{DZTh.3.1}
		Let $u\in \Hs$ with $f(u)\in \RR$.
%		the following facts hold:
%		\begin{enumerate}
%			\item for every $v\in \Hs$ with $(g(u)v)^+\in L^1(\RR^n)$, we have
%			$$f^0(u;v)\leq \int (g(u)+u)v,$$%\todo{non abbiamo definito da nessuna parte: se ci serve allora, come in  }
%			in particular, the above inequality is true for any $v\in \HL,$
%			\item 
If $\partial (-f)(u)\neq\emptyset$, then 
$$
\sup\Big\{\int (-g(u)-u)v:v\in \HL, \|v\|\leq 1 \Big\}<+\infty,
$$
and hence $-g(u)-u\in H^{-s}(\RR^n)$ upon identitication of $-g(u)-u$ with its unique extension. Furthermore 
$\partial (-f)(u)=\{-g(u)-u\}$ and 
			for all  $v\in \Hs$ with  $(g(u)v)^+\in L^1(\RR^n)$
			or $(g(u)v)^{-}\in L^1(\RR^n)$, it holds
			$$
			\langle -g(u)-u,v \rangle=\int (-g(u)-u)v.
			$$
			In particular, this holds true for every $v\in \HL$.
%		\end{enumerate}
	\end{theorem}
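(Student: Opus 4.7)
My approach adapts the strategy of Degiovanni--Zani \cite[Thm.\ 3.1]{DegZan2000}. The core is to compute the generalized directional derivative $(-f)^0(u;\cdot)$ on the test space $\HL$, and then identify the subdifferential by duality. For every $v \in \HL$ I claim
\[
(-f)^0(u;v) = \int (-g(u)-u)\, v.
\]
To establish the upper bound I choose the constant map $\mathcal{V}((w,\mu),t) \equiv v$ in the definition of $f^0_\epsilon$ and exploit $G(b) - G(a) = 2\int_a^b (g(\tau)+\tau)\,d\tau$ to write
\[
-f(w+tv) + f(w) = -t \int_0^1 \int \bigl(g(w+\tau t v) + w + \tau t v\bigr)\, v \, d\tau.
\]
Since $v \in L^\infty$ has compact support $K$, and $w$ ranges in an $\Hs$-neighborhood of $u$ (hence in a bounded set of $L^p(K)$ for every $p<2^*_s$), the bound $|g(s)| \leq C(1+|s|^{1+\delta})$ for small $\delta>0$ gives equi-integrability of $g(w+\tau t v)\, v$ on $K$. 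Passing $w \to u$ in $\Hs$ and $t \to 0^+$ via dominated convergence yields the upper estimate; the reverse inequality follows by applying the same argument to $-v$ and using the positive homogeneity and subadditivity of $(-f)^0(u;\cdot)$.

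Fix now $\alpha \in \partial(-f)(u)$. By the definition of the subdifferential together with the identity just established,
\[
\langle \alpha, v\rangle \leq (-f)^0(u;v) = \int (-g(u)-u)\, v, \qquad \text{for all } v \in \HL,
\]
and applying the same inequality to $-v$ gives the reverse bound, so $\langle \alpha, v\rangle = \int(-g(u)-u)v$ on $\HL$. Since $\alpha \in H^{-s}(\R^n)$, this yields $|\int (-g(u)-u)\,v | \leq \|\alpha\|_{H^{-s}} \|v\|$ for every $v \in \HL$, which proves the boundedness conclusion upon passing to the supremum over $\|v\|\leq 1$. By Lemma \ref{lem5}, $\HL$ is dense in $\Hs$, so the linear functional $v \mapsto \int(-g(u)-u)v$ has a unique continuous extension to $\Hs$, which must coincide with $\alpha$. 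Identifying $-g(u)-u$ with this extension gives $\partial(-f)(u) = \{-g(u)-u\}$.

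For the final assertion on $v \in \Hs$ with $(g(u)v)^+ \in L^1(\R^n)$ or $(g(u)v)^- \in L^1(\R^n)$, Proposition \ref{convergzz}(\ref{en2}) furnishes $v_k \in \HL$ with $v_k \to v$ in $\Hs$ and $\int g(u)\,v_k \to \int g(u)\,v$. Combined with $\int u\, v_k \to \int u\, v$ and the continuity of the $H^{-s}$--$\Hs$ duality, passing to the limit in the identity $\langle -g(u)-u, v_k\rangle = \int(-g(u)-u)v_k$ established above delivers the claim. I expect the main obstacle to be the first step: one must justify passing to the limit in $\int g(w+\tau t v)\, v$ as $(w,t) \to (u,0)$ in $\Hs \times \R$, which requires a uniform integrability argument on the support of $v$, leveraging the slightly super-linear growth of $g$ and the local $L^p$-compactness of $\Hs$ for $p<2^*_s$. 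Once this Taylor-type expansion is in place, the identification of the subdifferential and the extension by density become essentially formal.
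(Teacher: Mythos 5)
Your proof is correct and follows essentially the same route as the paper, which establishes this result by carrying over the argument of Degiovanni--Zani \cite[Theorem 3.1]{DegZan2000}: compute $(-f)^0(u;\cdot)$ on $\HL$ via the Taylor-type identity for $G$, identify any $\alpha\in\partial(-f)(u)$ with $-g(u)-u$ by testing with $\pm v$, extend by density, and handle general $v\in\Hs$ through Proposition~\ref{convergzz}\eqref{en2}. The uniform integrability point you flag is exactly the step where the fractional setting enters (local compactness of $\Hs\hookrightarrow L^p(K)$ for $p<2^*_s$ replaces the local Sobolev embedding), and your treatment of it is adequate.
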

\noindent
Finally, in our case the (epi)$_{c}$ condition and (\ref{it:WSH}) of 
Theorem \ref{astratto} is easy to prove thanks to the following theorem. 
	\begin{theorem}
		\label{DZTh.3.4}
		Let $(u,\lambda)\in \operatorname{epi}(f)$ with $\lambda>f(u)$. Then $|d \mathcal{G}_f|(u,\lambda)=1$ and, furthermore, $|d_{\mathbb{Z}_2}\mathcal{G}_f|(0,\lambda)=1$ for all $\lambda>f(0).$
		%\todo{noi sappiamo che $g$ è dispari e non abbiamo esplicita %dipendenza da $x$}
	\end{theorem}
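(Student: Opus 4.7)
My plan is to pin down the upper and lower bounds separately. The upper bound $|d\mathcal{G}_f|(u,\lambda)\leq 1$ comes for free: as noted in the preliminaries, $\mathcal{G}_f$ is $1$-Lipschitz on $\operatorname{epi}(f)\subseteq\Hs\times\R$, and comparing the two inequalities built into the definition of weak slope (the displacement bound and the decrease rate) immediately forces any admissible $\sigma$ to satisfy $\sigma\leq 1$. The whole content of the theorem is therefore to produce a deformation realizing $\sigma=1$.

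The key observation I would exploit is that, although $J$ is only lower semicontinuous, the nonlinear piece $f(u)=\frac{1}{2}\int G(u)$ is itself \emph{upper} semicontinuous on $\Hs$ with values in $[-\infty,+\infty)$. Indeed, the decomposition $J=\frac{1}{2}\|\Ds{s/2}\cdot\|_2^2+\frac{\omega+1}{2}\|\cdot\|_2^2-f$ pairs the LSC of $J$ with the continuity of the quadratic terms, so $-f$ is LSC. Given $(u,\lambda)$ with $\lambda>f(u)$, I would fix $M\in\R$ with $f(u)<M<\lambda$ and use USC to produce a neighborhood $U$ of $u$ in $\Hs$ on which $f(w)\leq M$. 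I then shrink $\delta>0$ so that the projection of the ambient ball $B_\delta(u,\lambda)$ onto the first factor lies in $U$ and $2\delta\leq\lambda-M$.

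With those ingredients in place, the deformation I propose is the literal vertical slide
\[
\mathcal{H}\bigl((w,\mu),t\bigr):=(w,\mu-t),
\]
defined on $\bigl(B_\delta(u,\lambda)\cap\operatorname{epi}(f)\bigr)\times[0,\delta]$. If $(w,\mu)$ lies in that set and $t\in[0,\delta]$, then $w\in U$ and $\mu>\lambda-\delta$, whence
\[
\mu-t\ \geq\ \lambda-2\delta\ \geq\ M\ \geq\ f(w),
\]
so $\mathcal{H}$ actually takes values in $\operatorname{epi}(f)$. The $\Hs\times\R$-displacement equals $t$ exactly, and $\mathcal{G}_f(\mathcal{H}((w,\mu),t))=\mu-t=\mathcal{G}_f(w,\mu)-t$; this confirms that $\sigma=1$ is admissible, yielding $|d\mathcal{G}_f|(u,\lambda)\geq 1$ and, combined with the Lipschitz upper bound, the asserted equality.

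For the equivariant slope at $(0,\lambda)$, $\lambda>f(0)=0$, I would rerun exactly the same construction: $G$ is even, so $f$ and $\operatorname{epi}(f)$ are invariant under $(w,\mu)\mapsto(-w,\mu)$ and the neighborhood $U$ can be chosen symmetric. The first component $\mathcal{H}_1((w,\mu),t)=w$ is tautologically odd in $w$, so the equivariance condition $\mathcal{H}_1((-w,\mu),t)=-\mathcal{H}_1((w,\mu),t)$ is automatic. The only real obstacle in the whole argument is noticing that $f$ is USC via the decomposition of $J$; once that is in hand, the deformation is just the trivial epigraph translation of \cite{DegZan2000}.
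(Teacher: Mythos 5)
Your proof is correct and is essentially the argument the paper delegates to \cite[Theorem 3.4]{DegZan2000}: the $1$-Lipschitz bound on $\mathcal{G}_f$ gives $\sigma\leq 1$, and the vertical translation $(w,\mu)\mapsto(w,\mu-t)$ realizes $\sigma=1$ once one knows it stays in $\operatorname{epi}(f)$. The only problem-specific point, the upper semicontinuity of $f$ near $u$, is exactly what you verify, and it also follows directly from $G^+(s)\leq C|s|^{q}$ with $2<q<2^*_s$ and Fatou's lemma, as in Remark~\ref{semicont}.
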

\noindent
The proof can be obtained arguing as in \cite[Theorem 3.4]{DegZan2000}.

\section{Palais-Smale condition}
\label{palais}
\noindent
In this section we prove that $J$ 
satisfies the Palais-Smale condition, thus we can apply Theorem \ref{astratto} to prove the existence of infinitely many weak solutions to \eqref{logeq}, namely functions $u\in\Hs$ such that 
\eqref{def-w} holds for any $v\in\HL$.
Notice that,  that if $u\in\Hs$ and $v\in\HL$, by Proposition \ref{convergzz} we can consider
\begin{equation} 
\label{equat-var}
\langle J'(u),v\rangle=\int \Ds{s/2}{u}\Ds{s/2}{v}+\omega\int uv-\int uv\log u^2.
\end{equation}

\noindent
We will need the following

\begin{proposition}\label{linkder-pend}
Let $u\in \Hs$ with $J(u)\in \RR$ and $|dJ|(u)<+\infty$. Then the following facts hold:
\begin{enumerate}
\item \label{lp1}$g(u)\in L^1_{\rm loc}(\RR^n)\cap H^{-s}(\RR^n)$ and 
$|\langle \alpha_u,v\rangle|\leq|dJ|(u)\norm{v}{}$ for all $v\in \Hs$, where
\begin{equation}
\label{alfa-def}
\langle \alpha_u,v\rangle:=\int \Ds{s/2}{u}\Ds{s/2}{v}+(\omega + 1)\int uv+\langle -g(u)-u,v \rangle.
\end{equation}
In particular, for every $v\in \HL$ we have
$$
|\langle J'(u),v\rangle|\leq|dJ|(u)\norm{v}{}.
$$
%by identifying $J'(u)$ as an element in $H^{-s}(\RR^n)$.
\item \label{tests} if $v\in\Hs$ is such that $(g(u)v)^+\in L^1(\RR^n)$ or $(g(u)v)^-\in L^1(\RR^n)$, then $g(u)v\in L^1(\RR^n)$ and 
identity \eqref{equat-var} holds.
\end{enumerate}
\end{proposition}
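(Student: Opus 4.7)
The plan is to exploit the decomposition $J = \Phi - f$, where
$\Phi(u):=\frac{1}{2}\norm{\Ds{s/2}u}{2}^2 + \frac{\omega+1}{2}\norm{u}{2}^2$
is a continuous quadratic form on $\Hs$, hence of class $C^\infty$, with $\Phi'(u)\in\hs$ acting by $\langle\Phi'(u),v\rangle = \int \Ds{s/2}u\,\Ds{s/2}v + (\omega+1)\int uv$. Thus $J$ is the sum of a smooth functional $\Phi$ and the lower semi-continuous functional $-f$, which is exactly the setting in which the weak slope/subdifferential calculus of \cite{DegZan2000} operates.

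The core abstract step is the following quantitative link: if $|dJ|(u)=\sigma<+\infty$, then $\partial(-f)(u)\neq\emptyset$ and there exists $\beta\in\partial(-f)(u)$ with $\norm{\Phi'(u)+\beta}{\hs}\leq\sigma$. I would obtain this by adapting the argument of \cite[Theorem 3.3]{DegZan2000}: from the definition of $|dJ|(u)$ applied to straight-line homotopies, combined with the Fr\'echet differentiability of $\Phi$, one derives the one-sided directional estimate $(-f)^0(u;v)\geq -\sigma\norm{v}{}-\langle\Phi'(u),v\rangle$ for every $v\in\Hs$. Since $(-f)^0(u;\cdot)$ is convex, positively homogeneous and lower semi-continuous by \cite[Corollary 4.6]{CamDeg}, a Hahn-Banach separation in $\Hs$ then produces $\beta\in\hs$ dominated by $(-f)^0(u;\cdot)$ (so $\beta\in\partial(-f)(u)$) and with $\norm{\beta+\Phi'(u)}{\hs}\leq\sigma$.

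Once $\partial(-f)(u)\neq\emptyset$, Theorem \ref{DZTh.3.1} applies and forces $\partial(-f)(u)=\{-g(u)-u\}$ together with $-g(u)-u\in\hs$ via its unique extension. Hence $\beta=-g(u)-u$ and, since $u\in\Hs\subset\hs$, $g(u)\in\hs$; the local integrability $g(u)\in L^1_{\rm loc}(\RR^n)$ follows from Proposition \ref{convergzz}(\ref{en1}) applied to any $\HL$-bump function equal to $1$ on a prescribed compact set. The functional $\alpha_u$ of \eqref{alfa-def} then coincides with $\Phi'(u)+\beta$ and satisfies $\norm{\alpha_u}{\hs}\leq |dJ|(u)$, which yields $|\langle\alpha_u,v\rangle|\leq |dJ|(u)\norm{v}{}$ for all $v\in\Hs$. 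For $v\in\HL$, the integral representation of $\langle -g(u)-u,v\rangle$ provided by Theorem \ref{DZTh.3.1} combined with the definition \eqref{equat-var} gives $\langle\alpha_u,v\rangle=\langle J'(u),v\rangle$, concluding (1).

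For (2), assume for definiteness $(g(u)v)^+\in L^1(\RR^n)$, the other case being symmetric. Proposition \ref{convergzz}(\ref{en2}) yields a sequence $\{v_k\}\subset\HL$ with $v_k\to v$ in $\Hs$ and $\int g(u)v_k\to\int g(u)v$, a priori in $\RR\cup\{-\infty\}$. Applying (1) to each $v_k$ we have $|\langle J'(u),v_k\rangle|\leq |dJ|(u)\norm{v_k}{}$, uniformly bounded; since the linear pieces $\int\Ds{s/2}u\,\Ds{s/2}v_k+\omega\int uv_k$ converge by strong $\Hs$-convergence, expanding through \eqref{equat-var} forces $\int g(u)v_k$ to converge to a \emph{finite} limit, whence $g(u)v\in L^1(\RR^n)$. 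Passing to the limit in \eqref{equat-var} written for $v_k$ then yields the identity for $v$. The main obstacle is the abstract step in the second paragraph: the quantitative coupling of the $C^1$ character of $\Phi$ and the lower semi-continuity of $-f$ requires the careful homotopy construction and Hahn-Banach separation above; the rest of the proof is essentially a matter of invoking Theorem \ref{DZTh.3.1} and the approximation Proposition \ref{convergzz}.
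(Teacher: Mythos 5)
Your proof is correct and follows essentially the same route as the paper: decompose $J=S-f$ with $S$ the smooth quadratic part, deduce that $\partial(-f)(u)\neq\emptyset$ and hence equals $\{-g(u)-u\}$ by Theorem~\ref{DZTh.3.1}, bound $\|\alpha_u\|_{H^{-s}(\mathbb{R}^n)}$ by the weak slope, and settle part (2) via Proposition~\ref{convergzz}. The only difference is presentational: where you re-derive the abstract link between $|dJ|(u)$ and the subdifferential of the sum by a homotopy and Hahn--Banach argument, the paper simply invokes \cite[Theorem 4.13]{CamDeg} and the sum rule \cite[Corollary 5.3]{CamDeg}.
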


\begin{proof}
As in the proof of (\ref{en1}) in Proposition \ref{convergzz} we have $g \in L^1_{{\rm loc}}(\R^n).$  
Moreover we can write our functional as $J(u)=S(u)-f(u)$,
where $f$ is as in \eqref{f-deff} and 
$$
S(u)=\frac{1}{2}\int |\Ds{s/2}u|^2+\frac{\omega+1}{2}\int u^2.
$$
Using the  properties of the weak slope (see e.g.\ \cite[Theorem 4.13]{CamDeg}), we can see that $\partial J(u)\neq\emptyset$ and, by the calculus rule, $\partial J(u)\subseteq \partial S(u)+\partial (-f)(u)$ (see \cite[Corollary 5.3]{CamDeg}), the $\partial (-f)(u)$
is nonempty too. By Theorem \ref{DZTh.3.1}
we obtain that $\partial (-f)(u)= \{-u-g(u)\}$.  Since $S$ is $C^1$, again by \cite[Corollary 5.3]{CamDeg}, 
$\partial S(u)=\{S'(u)\}$ and then by 
\cite[Theorem 4.13, (iii)]{CamDeg}, we have $\partial J(u)=\{\alpha_u\}$ and
\[
|dJ|(u)\norm{v}{}\geq\min\{\|\beta\|_{H^{-s}}:\beta\in\partial J(u)\}\norm{v}{}=\|\alpha_u\|_{H^{-s}}\norm{v}{}\geq |\langle \alpha_u,v\rangle|.
\]
The second part follows by using \eqref{alfa-def} and
assertion \eqref{en2} of Proposition \ref{convergzz}.
\end{proof}

%\noindent \todo[inline]{Per completezza scrivere qualcosa (nella tesi potrebbe esserci la dimostrazione) [M]}
%For the proof see \cite[Proposition 2.1]{DavMonSqua}

\begin{remark}\rm
\label{semicont}
It is readily seen that 
$J$ is lower semi-continuous, see e.g.\ \cite[Proposition 2.2]{DavMonSqua} for the details.
Alternatively, one can observe that there exist $q>2$ and $C>0$ such that $G(s)\leq C|s|^{q}$ for all $s\in\R$.
Then, the assertion follows by a variant of Fatou's lemma.
\end{remark}
%\todo[inline]{anche con reverse Fatou Lemma}

\noindent
%In the following we denote with  $G_1$ and $G_2$ the continuous functions
%$$
%G_1(s):=(s^2\log s^2)^-
%\quad
%\text{and}
%\quad 
%G_2(s):=(s^2\log s^2 )^+.
%$$
\noindent
Finally, we can prove the following

\begin{proposition}
\label{comp-cps}
$J|_{H^s_{\rm rad}(\RR^n)}$ satisfies the Palais-Smale condition at level $c$ for every $c\in\mathbb{R}$.
\end{proposition}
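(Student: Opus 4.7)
The plan is to show strong $\Hs$-convergence of any Palais-Smale sequence $\{u_k\}\subset H^s_{\rm rad}(\RR^n)$ at level $c$, in four stages. First, boundedness. Choosing $a>0$ so that $1-a^2/\pi^s>0$, the log-Sobolev estimate \eqref{MPG1} yields $J(u_k)\geq C_1\|\Ds{s/2}u_k\|_2^2-\|u_k\|_2^2\log\|u_k\|_2^2+C_2\|u_k\|_2^2$. On the other hand, since $(g(u_k)u_k)^+\leq C|u_k|^{2+\delta}\in L^1(\RR^n)$ for $\delta>0$ small, Proposition \ref{linkder-pend} provides the Nehari identity $\|u_k\|_2^2=2J(u_k)-\langle J'(u_k),u_k\rangle$ together with the bound $|\langle J'(u_k),u_k\rangle|\leq|dJ|(u_k)\|u_k\|$. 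Combining and iterating the two inequalities gives uniform boundedness in $\Hs$, with in fact $\|u_k\|_2^2\to 2c$. Passing to a subsequence we may assume $u_k\rightharpoonup u$ in $\Hs$, $u_k\to u$ in $L^q(\RR^n)$ for every $q\in(2,2^*_s)$ by the compact Strauss embedding, and $u_k\to u$ a.e.

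Second, identification of the limit. For any $v\in\HL$, $\langle J'(u_k),v\rangle\to 0$; the linear terms pass to the limit by weak convergence, while $\int u_kv\log u_k^2\to\int uv\log u^2$ follows from the compactness of $\mathrm{supp}(v)$, a.e.\ convergence there, and the uniform local bound $|u_kv\log u_k^2|\leq C\|v\|_\infty(1+|u_k|^{1+\delta})$ via Vitali's theorem and $L^{2+\delta}$-strong convergence on $\mathrm{supp}(v)$. Hence $u$ is a weak solution of \eqref{logeq}. Approximating $u$ by $v_j\in\HL$ with $v_j\to u$ in $\Hs$ and $\int g(u)v_j\to\int g(u)u$ (Proposition \ref{convergzz}(2), valid since $(g(u)u)^+\in L^1(\RR^n)$), we also obtain the Pohozaev identity $\|\Ds{s/2}u\|_2^2+\omega\|u\|_2^2=\int u^2\log u^2$ for the limit.

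Third and crucial, the one-sided estimate
\[
\limsup_k\int u_k^2\log u_k^2\leq\int u^2\log u^2.
\]
We split $t^2\log t^2=h_1(t)+h_2(t)$ with $h_1(t)=t^2\log t^2\cdot\chi_{\{|t|\geq 1\}}\geq 0$ and $h_2(t)=t^2\log t^2\cdot\chi_{\{|t|<1\}}\leq 0$. Both functions are continuous because the integrand vanishes at $|t|=1$. For $h_1$ the bound $h_1(t)\leq C_\delta|t|^{2+\delta}$ for $|t|\geq 1$ together with strong $L^{2+\delta}$-convergence of $u_k$ yields $\int h_1(u_k)\to\int h_1(u)$ by generalized dominated convergence. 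For $h_2\leq 0$ we apply Fatou's lemma to the non-negative sequence $-h_2(u_k)$, obtaining $\limsup_k\int h_2(u_k)\leq\int h_2(u)$. Summing the two yields the claim.

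Finally, strong convergence. Let $A_k,B_k,C_k$ denote $\|\Ds{s/2}u_k\|_2^2$, $\|u_k\|_2^2$, $\int u_k^2\log u_k^2$, and $A,B,C$ the same for $u$. The two Pohozaev identities give $A_k+\omega B_k-C_k\to 0$ and $A+\omega B-C=0$, hence $(A_k-A)+\omega(B_k-B)=(C_k-C)+o(1)$. Weak $L^2$-lower semicontinuity yields $\liminf(A_k-A)\geq 0$ and $\liminf(B_k-B)\geq 0$, while the previous step gives $\limsup(C_k-C)\leq 0$. Therefore $\lim_k[(A_k-A)+\omega(B_k-B)]=0$, which combined with the non-negativity in liminf of each summand forces $A_k\to A$ and $B_k\to B$. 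Together with weak convergence, this upgrades to $u_k\to u$ in $L^2(\RR^n)$ and $\Ds{s/2}u_k\to\Ds{s/2}u$ in $L^2(\RR^n)$, so that $u_k\to u$ in $\Hs$. The main obstacle is the third step: the nonlinearity $t^2\log t^2$ has no definite sign and lacks convexity, so Fatou's lemma cannot be applied to the full integrand; the splitting at $|t|=1$ is the decisive device.
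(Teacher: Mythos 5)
Your proof is correct and follows essentially the same architecture as the paper's: boundedness from the identity $\|u_k\|_2^2=2J(u_k)-\langle J'(u_k),u_k\rangle$ combined with the fractional logarithmic Sobolev inequality, identification of the weak limit as a weak solution, the one-sided inequality $\limsup_k\int u_k^2\log u_k^2\leq\int u^2\log u^2$ (your explicit splitting of $t^2\log t^2$ at $|t|=1$ is precisely the ``variant of Fatou's lemma'' the paper invokes via Remark~\ref{semicont}), and finally strong convergence from norm convergence plus weak convergence. The only local difference is in passing to the limit in $\int u_k v\log u_k^2$ for $v\in\HL$: you use domination by $C\|v\|_\infty(1+|u_k|^{1+\delta})$ on the compact support of $v$ together with strong local convergence, whereas the paper truncates the test function as $\theta_R(u_k)v$ and lets $R\to\infty$; both routes are valid and yours is arguably more direct.
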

\begin{proof}
%\todo[inline]{combiare un po' questa prima parte (uguale a DMS) [M]}
Let $\{u_k\}\subset \Hs$ be a Palais-Smale sequence of $J$, i.e.
$J(u_k)\to c$ and $|dJ|(u_k) \to 0$, thus by Proposition~\ref{linkder-pend} we have that $\langle J'(u_k),v\rangle=o(1)\|v\|$
for any $v\in \HL$.
It is easy to see that if $u\in\Hs$, then $(u^2\log u^2)^+\in L^1(\RR^n)$, thus by  \eqref{tests}
Proposition~\ref{linkder-pend}, the $u_k$ are  admissible test functions
in equation~\eqref{equat-var} and
\begin{equation}
\label{eq:boundL2}
\| u_k\|_{2}^2 = 2 J(u_k) - \langle J'(u_k),u_k\rangle
\leq 2c+o(1) \|u_k\|.
\end{equation}
Using \ref{logsob}, we have that
\begin{align*}
\|u_k\|^2 &= 2J(u_k)-\omega \| u_k\|_{2}^2+\int u_k^2\log u_k^2\\
&\leq 2c + \frac{a^2}{\pi^s}\norm{\Ds{s/2}u_k}{2}^2+\norm{u_k}{2}^2\log\norm{u_k}{2}^2
-\Big(\omega+n+\frac{n}{s}\log a+\log \frac{s\Gamma(\frac{n}{2})}{\Gamma(\frac{n}{2s})}\Big)\norm{u_k}{2}^2.
\end{align*}
Thus for $a>0$ and $\delta>0$ small and by \eqref{eq:boundL2} we have
\[
\|u_k\|^2
\leq 
C+o(1) \|u_k\|^{1+\delta} + o(1)\|u_k\|
\]
%{\color{red}
%\todo[inline]{nella disequazione sopra abbiamo la seminorma di Gagliardo al quadrato che dobbiamo portare a sinistra per avere i quadrati tutti da una parte, quindi le costanti vanno cambiate}
%\begin{align*}
%\|u_k\|^2\leq 2c+ C \|u_k\|_{2}^2+ \|u_k\|_{2}^{2(1+\delta)}
%&\leq C+ C(1+ o(1)\|u_k\|)^{1+\delta} + o(1)\|u_k\|.
%\end{align*}}
%\todo[inline]{togliere un po' di parentesi}
and so $\{u_k\}$ is bounded in $\Hs$.
\noindent
Let $\{u_k\}$ now be a Palais-Smale sequence for $J$ in $H^s_{\rm rad}(\RR^n)$. 
By the boundedness of $\{u_k\}$ and thanks to the compact embedding $H^s_{\rm rad}(\RR^n)\hookrightarrow L^p(\RR^n)$ for $2<p<2_s^*$, we have that up to a subsequence, there exists $u\in H^s_{\rm rad}(\RR^n)$ such that
\[
u_k \rightharpoonup u  \hbox{ in } H^s_{\rm rad}(\RR^n),   %\notag
\qquad
u_k \to u  \hbox{ in } L^p(\RR^n),\,\,\,\,\, 2<p<2^*_s,%\label{converg}
\qquad
u_k \to u  \hbox{ a.e. in } \RR^n.  %\label{eq:aeconv}
\]
We want to prove that for all $v\in\HL$
\begin{equation}
\label{solprobl}
\int\Ds{s/2}{u}\Ds{s/2}{v}
+\omega\int u v
=\int u v \log u^2.
\end{equation}
So, fixed $v\in\HL$, let us consider 
$\theta_R(u_k)v$,
where, given $R>0$, $\theta_R:\mathbb{R}\to[0,1]$ is a $C^{0,1}$ function such that $\theta_R(s)=1$ for $|s|\leq R$,  $\theta_R(s)=0$ for $|s|\geq 2R$ and $|\theta'_R(s)|\leq C/R$ in $\mathbb{R}$.
Obviously, as in Lemma \ref{lem5} we have that $\theta_R(u_k)v\in\HL$.
Thus, by~\eqref{equat-var} and \eqref{normeq} we have
%{\color{red}\begin{align*}
%\langle J'(u_k),\theta_R(u_k)v\rangle
%=\int\Ds{s/2}{u_k}\Ds{s/2}{(\theta_R(u_k)v)}+\omega\int\theta_R(u_k)u_k v -\int \theta_R(u_k)u_k v\log u_k^2
%\end{align*}}
\begin{align*}
\langle J'(u_k),\theta_R(u_k)v\rangle
&=
\int\Ds{s/2}{u_k}\Ds{s/2}{(\theta_R(u_k)v)}
+\omega\int\theta_R(u_k)u_k v 
-\int \theta_R(u_k)u_k v\log u_k^2\\
&=
\frac{C(n,s)}{2}
\int_{\RR^{2n}}\frac{(u_k(x)-u_k(y))(\vartheta_R(u_k(x))v(x)-\vartheta_R(u_k(y))v(y))}{|x-y|^{n+2s}}
\\
&\qquad
+\omega\int\theta_R(u_k)u_k v 
-\int \theta_R(u_k)u_k v\log u_k^2\\
&=
\frac{C(n,s)}{2} \int_{\RR^{2n}}\frac{\theta_R(u_k(x))(u_k(x)-u_k(y))}{|x-y|^{\frac{n+2s}{2}}}\frac{(v(x)-v(y))}{|x-y|^{\frac{n+2s}{2}}}\\
&\qquad
+\frac{C(n,s)}{2} \int_{\RR^{2n}}\frac{v(y)(\theta_R(u_k(x))-\theta_R(u_k(y)))(u_k(x)-u_k(y))}{|x-y|^{n+2s}}\\
&\qquad
+\omega\int\theta_R(u_k)u_k v 
-\int \theta_R(u_k)u_k v\log u_k^2.
\end{align*}
Then, we obtain
\begin{align*}
&\Big| \frac{C(n,s)}{2}\int_{\RR^{2n}}\frac{\theta_R(u_k(x))(u_k(x)-u_k(y))}{|x-y|^{\frac{n+2s}{2}}}\frac{(v(x)-v(y))}{|x-y|^{\frac{n+2s}{2}}} 
+\omega\int_{\RR^n}\theta_R(u_k)u_k v 
-\int_{\RR^n} \theta_R(u_k)u_k v\log u_k^2  \\
&\quad
-\langle J'(u_k),\theta_R(u_k)v\rangle\Big|
\leq\|v\|_{\infty}\frac{C}{R}\int_{\RR^{2n}}\frac{|u_k(x)-u_k(y)|^2}{|x-y|^{n+2s}}
\leq \frac{C}{R}.
\end{align*}
Since
$$
\theta_R(u_k(x))\frac{(u_k(x)-u_k(y))}{|x-y|^{\frac{n+2s}{2}}}\text{ is bounded in } L^2(\RR^{2n}), \quad \frac{(v(x)-v(y))}{|x-y|^{\frac{n+2s}{2}}}\in L^2(\RR^{2n}),
$$
and
\[
\theta_R(u_k(x))\frac{(u_k(x)-u_k(y))}{|x-y|^{\frac{n+2s}{2}}}
\to
\theta_R(u(x))\frac{(u(x)-u(y))}{|x-y|^{\frac{n+2s}{2}}}
\quad
\text{a.e. } (x,y)\in \RR^{2n}
\text{ as } k\to+\infty
\]
then
\[
\int_{\RR^{2n}}\frac{(v(x)-v(y))}{|x-y|^{\frac{n+2s}{2}}}
\theta_R(u_k(x))\frac{(u_k(x)-u_k(y))}{|x-y|^{\frac{n+2s}{2}}}
\to
\int_{\RR^{2n}}\frac{\theta_R(u(x))(u(x)-u(y))}{|x-y|^{\frac{n+2s}{2}}}\frac{(v(x)-v(y))}{|x-y|^{\frac{n+2s}{2}}}
\] %Lemma 4.8 di Kesavan (pag 11)
as $ k\to+\infty$. In the same way, taking into account
that $\{\theta_R(u_k) u_k \log u_k^2 \}$ 
is bounded in $L^2_{\rm loc}(\R^n)$ and since $ \theta_R(u_k) u_k \log u_k^2 \to \theta_R(u) u \log u^2$
a.e. in $\RR^n$ and , %quindi conv debole
we have
\[
\Big| \frac{C(n,s)}{2} \int_{\RR^{2n}}\frac{\theta_R(u(x))(u(x)-u(y))}{|x-y|^{\frac{n+2s}{2}}}\frac{(v(x)-v(y))}{|x-y|^{\frac{n+2s}{2}}} 
+\omega \int \theta_R(u) u v
-\int \theta_R(u) u v \log u^2 \Big|
\leq \frac{C}{R}.
\]
Thus, letting $R\to\infty$, \eqref{normeq}  yields \eqref{solprobl}.
Moreover, see again Remark~\ref{semicont}, we have that
$$
\limsup_k \int u_k^2\log u_k^2\leq \int u^2\log u^2.
$$
Hence, since $\langle J'(u_k),u_k \rangle \to 0$ and choosing  $v=u$ 
in~\eqref{solprobl}, we get
\begin{align*}
\limsup_k (\|\Ds{\frac{s}{2}}{u_k}\|_2^2 +\omega \|u_k\|_2^2)
=\limsup_k \int u_k^2 \log u_k^2\leq \int u^2\log u^2= \|\Ds{\frac{s}{2}}{u}\|_2^2 +\omega \|u\|_2^2,
\end{align*}
which implies the convergence of $u_k\to u$ in $H^s_{\rm rad}(\RR^n)$.
\end{proof}

\section{Proof of Theorem \ref{main}}
\label{proof}

\subsection{Proof for existence}

\noindent
To prove the existence of sequence $\{u_k\}\subset \Hs$
of weak solutions to \eqref{logeq} with $J(u_k)\to+\infty$,
we will apply Theorem~\ref{astratto} with $X=H^s_{\rm rad}(\RR^n)$.
By Proposition~\ref{comp-cps} we know that $J$ satisfies 
the Palais-Smale condition. Furthermore, 
%{\color{blue}
%	\todo[inline]{Risistemare questa parte in blu: se serve, mantenere il Teorema \ref{DZTh.3.1} e, in questo caso, definire nella Sezione 2  $f^0$ (see \cite{DegZan2000}), e arrivare al Teorema \ref{DZTh.3.4} dicendo qualcosa. Anche qui andrebbe qualche dettaglio in pi\`u...}
%	
%	
%}
by Theorem \ref{DZTh.3.4} we have that 
$J$ satisfies (epi)$_c$ and (\ref{it:WSH}) of Theorem \ref{astratto}. Hence, we only have to prove that $J$ satisfies also the 
geometrical assumptions. Obviously, 
$J(0)=0$, and by \eqref{MPG1},
%the logarithmic Sobolev inequality~\eqref{logsob}, we have that
$J(u)\geq c \| u\|^2$,
%\begin{align*}
%J(u)&\geq
%\frac{1}{2}\left\{\left(1 - \frac{a^2}{\pi^s}\right) \|\Ds{\frac{s}{2}}{u}\|_{L^2}^2
%+ \left(\omega + 1 + n+\frac{n}{s}\log a+\log\frac{s\Gamma(\frac{n}{2})}{\Gamma(\frac{n}{2s})}- \log \|u\|_{L^2}^2\right) \| u\|_{L^2}^2\right\}\\
%&\geq c \| u\|_{\Hs}^2,
%\end{align*}
for a suitable $a$ and  if $\|u\|_{2}$ are sufficiently small.
Then, if we take $Z=H^s_{\rm rad}(\RR^n)$ and $V_0=\{0\}$ we have (\ref{it:G1}).
Finally, let $\{V_k\}$ a strictly increasing 
sequence  of finite-dimensional 
subspaces of $H^s_{\rm rad}(\RR^n)$.
Since any norm is equivalent on any $V_k$, if $\{u_m\}\subset V_k$ is such 
that $\|u_m\|\to +\infty$, then also $\mu_m:=\|u_m\|_{2} \to +\infty$. Set
now $u_m=\mu_m w_m$, where $w_m=\|u_m\|_{2}^{-1}u_m$. 
Thus $\|w_m\|_{2}=1$, $\|\Ds{s/2}{w_m}\|_{2}\leq C$
and $\|w_m\|_{\infty}\leq C$, and so
\begin{align*}
J(u_m)
=\frac{\mu_m^2}{2}  \Big(  \|\Ds{s/2}{w_m}\|_{2}^2 
+ \omega + 1 - \log \mu_m^2
- \int w_m^2 \log w_m^2 \Big)
\leq  \frac{\mu_m^2}{2} (C - \log \mu_m^2)\to-\infty.
\end{align*}
Thus, there exist $\{R_k\}\subset]\rho,+\infty[$ 
such that for $u\in V_k$ with $\|u\| \geq R_k$, $J(u)\leq 0$ and the condition \eqref{it:G2} is satisfied.
% and the assertion follows as, by Proposition~\ref{linkder-pend}, the critical points of $J$ in the sense of weak slope correspond to solutions to~\eqref{logeq}.

\subsection{Proof for regularity}
To prove the regularity we follow \cite{FQT}. First of all we define
\[
\mathcal{W}^{\beta,p}(\mathbb{R}^n)
=\{u\in L^p(\mathbb{R}^n):\mathcal{F}^{-1}[(1+|\xi|^\beta)\hat{u}]\in L^p(\mathbb{R}^n)\}.
\]
For the properties of this space, we refer to \cite{FQT}.
Now, let $u\in \Hs$ be a solution of \eqref{logeq} and $\{r_i\}$ a strictly decreasing sequence of positive constants with $r_0=1$. Let $B_i=B(0,r_i)$ and define
\[
h(x)= u(x) \log u^2(x).
%\quad
%\hbox{and}
%\quad
%\ell(x)= - \omega u(x) +h(x).
\]
We have that
\begin{equation}
\label{accaelle}
|h|\leq C_\delta (|u|^{1-\delta}+|u|^{1+\delta})
%\quad
%{\color{red}\hbox{and}
%\quad
%|\ell|\leq C_\delta (|u|^{1-\delta}+|u|^{1+\delta})}
\end{equation}
for all $\delta\in(0,1)$.
Now let $\eta_1\in C^\infty (\mathbb{R}^n)$, $0\leq\eta_1 \leq 1$, $\eta_1=0$ in $B_0^c$, $\eta_1=1$ in $B_{1/2}:=B(0,r_{1/2})$ with $r_1<r_{1/2}<r_0$ and $u_1$ be the solution of
\[
\Ds{s}u_1 + \omega u_1 = \eta_1 h
\quad
\hbox{in }\mathbb{R}^n 
\]
namely, $u_1=\mathcal{K}*(\eta_1 h)$, where $\mathcal{K}(x)=\mathcal{F}^{-1}(1/(\omega+|\xi|^{2s}))$ is the Bessel kernel.
Then
\[
\Ds{s}(u-u_1) +  \omega (u-u_1) = (1-\eta_1) h
\quad
\hbox{in }\mathbb{R}^n
\]
and so
\[
u-u_1=\mathcal{K}*[(1-\eta_1) h].
\]
By Sobolev embedding Theorem, $u\in L^{q_0}(\mathbb{R}^n)$ with $q_0=2n/(n-2s)$. Moreover, by \eqref{accaelle}, \cite[Theorem 3.3]{FQT} and 
%Young inequality we have that for a.e. $x\in B_1$
%\begin{equation}
%\label{38}
%|u(x) - u_1(x)| 
%\leq C 
%(\|\mathcal{K}\|_{L^{s_0}(B_{1/2}^c)}
%\|(1-\eta_1)^{1/(1-\delta)}u\|_{q_0}^{1-\delta}
%+
%\|\mathcal{K}\|_{L^{s_1}(B_{1/2}^c)}
%\|(1-\eta_1)^{1/(1+\delta)}u\|_{q_0}^{1+\delta} )
%\end{equation}}
H\"older inequality we have that for a.e. $x\in B_1$
	\begin{equation}
	\label{38}
	|u(x) - u_1(x)| 
	\leq C 
	(\|\mathcal{K}\|_{L^{s_0}(B_{r_{1/2} - r_1}^c)}
	\|(1-\eta_1)^{1/(1-\delta)}u\|_{q_0}^{1-\delta}
	+
	\|\mathcal{K}\|_{L^{s_1}(B_{r_{1/2} - r_1}^c)}
	\|(1-\eta_1)^{1/(1+\delta)}u\|_{q_0}^{1+\delta})
	\end{equation}
	where $s_0={q_0}/{(q_0-1+\delta)}$, $s_1={q_0}/{(q_0-1-\delta)}$ and 
	$\delta<\min\{1,(n+2s)/(n-2s)\}$.
	In fact,
	\begin{align*}
	|u(x) - u_1(x)| 
	& \leq
	\int_{B_{1/2}^c} |\mathcal{K}(x-y)| |(1-\eta_1(y))h(y)|dy  \\
	& \leq  
	C\Big(\int_{B_{1/2}^c(x)} |\mathcal{K}|^{s_0}\Big)^{1/{s_0}}
	\|(1-\eta_1)^{1/(1-\delta)}u\|_{q_0}^{1-\delta}   \\
	& +
	C\Big(\int_{B_{1/2}^c(x)} |\mathcal{K}|^{s_1}\Big)^{1/{s_1}}
	\|(1-\eta_1)^{1/(1+\delta)}u\|_{q_0}^{1+\delta}
	\end{align*}
%	But, since $x\in B_1$, if $s\in B_{1/2}^c(x)$ then
%	\[
%	|s|\geq |s-x| - |x| > r_{1/2} - r_1 >0.
%	\]
and $B_{1/2}^c(x)\subset B_{r_{1/2} - r_1}^c$.	Notice that, the same argument shows that
for all $z\in\R^n$ and for a.e. $x\in B_1(z)$
	\begin{equation*}
	|u(x) - u_1(x)| 
	\leq C 
	(\|\mathcal{K}\|_{L^{s_0}(B_{r_{1/2} - r_1}^c)}
	\|u\|_{q_0}^{1-\delta}
	+
	\|\mathcal{K}\|_{L^{s_1}(B_{r_{1/2} - r_1}^c)}
	\|u\|_{q_0}^{1+\delta}).
	\end{equation*}
and thus, in turn, since the right hand side is independent of the point $z$, it follows that $u-u_1\in L^\infty(\R^n)$.
%	we conclude
%	\[
%	\Big(\int_{B_{1/2}^c(x)} |\mathcal{K}|^{s_i}\Big)^{1/{s_i}}
%	\leq
%	{\color{red}{
%	\Big(\int_{B_{r_{1/2} - r_1}^c} |\mathcal{K}|^{s_i}\Big)^{1/{s_i}}
%	\leq
%	C
%	\|\mathcal{K}\|_{L^{s_i}(B_{1/2}^c)}
%	}}
%	\]
%{\color{red}Moreover $u_1$ satisfies
%\[
%\Ds{s}u_1=\ell_1
%\]
%where $\ell_1(x)= - \omega u_1(x) +\eta_1(x)h(x)$.\todo{potrebbe non servire}} 
Since $u\in L^{q_0}(\mathbb{R}^n)$ and $B_0$ is bounded, we have that $\eta_1 h\in L^{p_1}(\mathbb{R}^n)$ with $p_1=q_0/(1+\delta)$.
Then $u_1\in\mathcal{W}^{2s,p_1}(\mathbb{R}^n)$. 
%At this point we distinguish three cases:
%\begin{enumerate}
%\item $p_1<n/(2s)$
%\item $p_1=n/(2s)$
%\item $p_1>n/(2s)$\todo{forse non abbiamo bisogno di scrivere così} 
%\end{enumerate}

If $n<6s$, then, in \eqref{accaelle}, we take 
\[
\delta<\min\left\{1,\frac{6s-n}{n-2s}\right\}
\]
and so $p_1>n/(2s)$.

If $n\geq 6s$, we have that $p_1<n/(2s)$ and we proceed as follows. By Sobolev embedding and \eqref{38} we have that $u\in L^{q_1}(B_1)$ with $q_1=p_1n/(n-2sp_1)$. 
%Note that $q_1>q_0$ if and only if
%\begin{equation}
%\label{q1q0}
%\delta<\frac{4s}{n-2s}.
%\end{equation}
Then we repeat the procedure, namely we consider $\eta_2\in C^\infty (\mathbb{R}^n)$, $0\leq\eta_2 \leq 1$, $\eta_2=0$ in $B_1^c$, $\eta_1=1$ in $B_{3/2}:=B(0,r_{3/2})$ with $r_2<r_{3/2}<r_1$, getting that $u_2=\mathcal{K}*(\eta_2 h) \in \mathcal{W}^{2s,p_2}(\mathbb{R}^n)$ with $p_2=q_1/(1+\delta)$.

If $n<10 s$ and in \eqref{accaelle} we take
\[
\delta< \frac{-(n-4s)+\sqrt{4s(n-s)}}{n-2s}
%{\color{red}\left(<\min\left\{1,\frac{n+2s}{n-2s},\frac{4s}{n-2s}\right\}\right)}
\]
and we have that $p_2>n/(2s)$.

If $n\geq10s$, then $p_2<n/(2s)$ and we iterate this procedure. Straightforward calculations show that
\begin{equation}
\label{qj+1}
\frac{1}{q_{j+1}}=
\frac{1}{q_1} + \left(\frac{1}{q_1} - \frac{1}{q_0}\right)\sum_{i=1}^{j}(1+\delta)^i
=\frac{(1+\delta)^{j+1}}{q_0} - \frac{2s}{n}\sum_{i=0}^{j}(1+\delta)^i
\end{equation}
and, using \eqref{qj+1}, that $p_j>n/(2s)$ is equivalent to
\begin{equation}
\label{pazza}
(n-2s)(1+\delta)^j -4s\sum_{i=1}^{j-1} (1+\delta)^i -4s<0.
\end{equation}
From \eqref{pazza} we get that, if 
\begin{equation}
\label{condsteps}
2(2j-1)s\leq n< 2(2j+1)s,
\end{equation}
then we can take $\delta$ small enough such that $p_j>n/(2s)$. Of course, this procedure stops in $j$ steps with $j$ that satisfies \eqref{condsteps}.

%{\color{red}Moreover, the iterative procedure stops in a finite number of steps. Indeed, for $\delta$ small enough (see \eqref{q1q0}), $q_1 > q_0$ and then, for $j$ large the right hand side of \eqref{qj+1} becomes negative.
%
%Let $j$ the smallest natural number such that the right hand side of \eqref{qj+1} is non positive.
%Then $p_{j+1}\geq n/(2s)$.} 
%Thus, in any case, we can reduce to the case $p_h\geq n/(2s)$.\todo{$>$}
Thus, if $\ell$ is such that $p_{\ell}> n/(2s)$, since $u_{\ell}\in \mathcal{W}^{2s,p_{\ell}}(\mathbb{R}^n)$, by Sobolev imbeddings (see \cite[Theorem 3.2]{FQT}), we have that $u_{\ell}\in C^{0,\mu}(\R^n)$ for  $\mu>0$ small enough. 
Moreover, we can estimate $|u-u_\ell|$ in $B_{\ell}$ as in \eqref{38} and, using the smoothness of $\mathcal{K}$ away from the origin (see \cite[Theorem 3.3]{FQT}) and since $|x-y|\geq C >0$ for $x\in B_{\ell}$ and $y\in B_{\ell-1/2}^c$ we have that for $x\in B_{\ell}$
\[
|\nabla (u-u_{\ell}) (x)|
\leq
\int_{B_{\ell-1/2}^c} |\nabla\mathcal{K}(x-y)| 
%|(1-\eta_{j+1}(y))|
(|u(y)|^{1-\delta}+|u(y)|^{1+\delta})
\leq
C(n,s,\|u\|).
\]
Then $u-u_{\ell}\in W^{1,\infty}(B_{\ell})$ and so, $u-u_{\ell}\in C^{0,\mu}(B_{\ell})$. Then $u\in C^{0,\mu}(B_{\ell})$ and the $C^{0,\mu}$-norm depends on $n,s,\|u\|_{H^s}$ and on the finite sequence $r_0,\ldots,r_{\ell}$. 
%{\color{red}If, finally, $p=n/2s$, then we consider $0<\tilde{s}<s$. We have that $u_{j+1}\in\mathcal{W}^{2s,p_{j+1}}(\mathbb{R}^n)\subset\mathcal{W}^{2\tilde{s},p_{j+1}}(\mathbb{R}^n)$ (see e.g. \cite[Proposition 2.1]{DPV}) and $p_{j+1}<n/\tilde{s}$. So we can continue the iteration as before which can be stopped at the $j+2$-th step if $\tilde{s}$ is close enough to $s$.}
Moving $B_{\ell}$ around $\mathbb{R}^n$ we can recover it, obtaining that $u\in C^{0,\mu}(\mathbb{R}^n)$ and since in addition $u\in L^{q_0}(\mathbb{R}^n)$ we get that $u(x)\to 0$ as $|x|\to + \infty$. \newline
We now claim that for all $a,b\in\R$ with $a<b$ and any $\delta\in (0,1)$ we have $g\in C^{0,1-\delta}([a,b])$.
Indeed, if  $s,t\in [a,b]$ with e.g. $t>s>0$, then we have 
\[
|g(t)-g(s)| \leq \int_s^t |g'(\xi)|d\xi\leq 2\int_s^t (|\log\xi|+1)d\xi\leq C\int_s^t \xi^{-\delta} =C(t^{1-\delta}-s^{1-\delta})\leq C(t-s)^{1-\delta}.
\]
By symmetry the same inequality
holds for negative $s,t\in [a,b]$. If $s,t\in [a,b]$ with e.g. $s\leq 0\leq t$, we get
\begin{equation*}
|g(t)-g(s)|
\leq |g(t)| + |g(s)|
%\leq 2|t|| \log |t||+2|s|| \log |s||
\leq Ct^{1-\delta}+C(-s)^{1-\delta}\leq C(t-s)^{1-\delta},
\end{equation*} 
proving the claim. Then the regularity assertions of Theorem~\ref{main} follow by 
arguing as in \cite[p.\ 1251]{FQT}.

\bigskip
\medskip


\begin{thebibliography}{99}

%\bibitem{BL2}
%H. Berestycki, P.L. Lions,  {\it Nonlinear scalar field equations. II. Existence of infinitely many solutions},  Arch. Rational Mech. Anal. {\bf 82} (1983), 347--375. 

\bibitem{birula76}
I. Bia\l ynicki-Birula, J. Mycielski, {\it Nonlinear wave mechanics}, Ann. Physics {\bf 100} (1976),  62--93.

\bibitem{birula79}
I. Bia\l ynicki-Birula, J. Mycielski,  {\it Gaussons: solitons of the logarithmic Schr\"odinger equation}, Special issue on solitons in physics, Phys. Scripta {\bf 20} (1979),  539--544. 

\bibitem{birula3}
I. Bia\l ynicki-Birula, J. Mycielski, {\it Wave equations with logarithmic nonlinearities}, Bull.\ Acad.\ Polon.\ Sci.\ Cl.\ III {\bf 23}, 461 (1975). 

\bibitem{C}
L.\ Caffarelli,
Nonlocal equations, drifts and games, Nonlinear Partial Differential Equations,
Abel Symposia {\bf 7} (2012) 37--52.

%\bibitem{caffarelli}
%L. Caffarelli, L. Silvestre, 
%{\it An extension problem related to the fractional laplacian}, 
%Comm. Partial Differential Equations {\bf 32} (2007), 1245--1260.

\bibitem{CamDeg}  
I. Campa, M. Degiovanni, 
{\it Subdifferential calculus and non-smooth critical point theory}, SIAM J. Optim. {\bf 10} (2000), 1020--1048.

\bibitem{cazNA} 
T. Cazenave, {\it Stable solutions of the logarithmic Schr\"odinger equation}, Nonlinear Anal. {\bf 7} (1983), 1127--1140.

\bibitem{caz1} 
T. Cazenave, {\it An introduction to nonlinear
Schr\"odinger equations}, Textos de M\'etodos Matem\'aticos
{\bf 26}, Universidade Federal do Rio de Janeiro 1996.

\bibitem{cazhar} 
T. Cazenave, A. Haraux, {\it \'Equations d'\'evolution avec non lin\'earit\'e logarithmique}, 
Ann.\ Fac.\ Sci.\ Toulouse Math. {\bf 2} (1980), 21--51. 

\bibitem{CotTav} 
A. Cotsiolis, N. Tavoularis, 
{\it On logarithmic Sobolev inequalities for higher order fractional derivatives}, C. R. Acad. Sci. Paris. 
Ser. I {\bf 340} (2005), 205--208.

\bibitem{DavMonSqua}
P. d'Avenia, E. Montefusco, M. Squassina, 
{\it On the logarithmic Schr\"odinger equation}, 
Comm.\ Contemp.\ Math. {\bf 16} (2014), 1350032.

\bibitem{degmarz}
M. Degiovanni, M. Marzocchi, 
{\it A critical point theory for nonsmooth functionals}, 
Ann. Mat. Pura Appl. {\bf 167} (1994), 73--100. 

\bibitem{DegZan96}  
M. Degiovanni, S. Zani,  
{\it Euler equations involving nonlinearities without growth conditions}, Potential Anal. {\bf 5} (1996), 505--512.

\bibitem{DegZan2000}  
M. Degiovanni, S. Zani, {\it Multiple solutions of semilinear elliptic equations with one-sided growth conditions}, Nonlinear operator theory, Math. Comput. Modelling {\bf 32} (2000), 1377--1393.

%\bibitem{FLS}
%R. Frank, E.\ Lenzmann, L. Silvestre,
%{\it Uniqueness of radial solutions for the fractional Laplacian}, preprint. 

\bibitem{DPV}
E.\ Di Nezza, G.\ Palatucci, E.\ Valdinoci,
{\it Hitchhiker's guide to the fractional Sobolev spaces},
Bull. Sci. Math. {\bf 136} (2012), 521--573.

%\bibitem{DMR1}
%{\sc D.G.\ de Figueiredo, O.H.\ Miyagaki, B.\ Ruf},
%Corrigendum: 'Elliptic equations in $\R^2$ with nonlinearities in the critical growth range',
%Calc. Var. Partial Differential Equations {\bf 4} (1996), no. 2, 203. 

%\bibitem{rab-book}
%{\sc P.H.\ Rabinowitz},
%{\it Minimax methods in critical point theory with applications to differential equations}. 
%CBMS Reg. Conf. Ser. Math. 65, Amer. Math. Soc., Providence, RI, 1986.

\bibitem{FQT}
P. Felmer, A. Quaas, J. Tan, {\it Positive solutions of the nonlinear Schr\"odinger equation with the fractional Laplacian}, 
Proc. Roy. Soc. Edinburgh Sect. A {\bf 142} (2012), 1237--1262.

\bibitem{Lask1}
N. Laskin, {\it Fractional quantum mechanics}, Physical Review E {\bf 62} (2000), 3135.

\bibitem{Lask2}
N. Laskin, {\it Fractional quantum mechanics and Levy path integrals}, Physics Letters A {\bf 268} (2000), 298--305.

\bibitem{Lask3}
N. Laskin, {\it Fractional Schr\"odinger equation}, 
Physical Review E, {\bf 66} (2002), 056108. 

\bibitem{Pierre}
P.L. Lions, {\it Sym\'etrie et compacit\'e dans les espaces de Sobolev}, 
J. Funct.\ Anal.\ {\bf 49} (1982), 315--334.

%\bibitem{SV}
%R.\ Servadei, E.\ Valdinoci,
%{\it Mountain pass solutions for non-local elliptic operators}, 
%J. Math. Anal. Appl. {\bf 389} (2012), 887--898. 

%\bibitem{SS}
%W. Sickel, L. Skrzypczak, 
%{\it Radial subspaces of Besov and Lizorkin-Triebel classes: extended Strauss lemma and compactness of embeddings}, 
%J. Fourier Anal. Appl. {\bf 6} (2000), 639--662. 

\bibitem{rosen}
G.\ Rosen, 
{\it Dilatation covariance and exact solutions in local relativistic field
theories}, Phys. Rev. {\bf 183} (1969) 1186.

\bibitem{sz}
A. Szulkin, 
{\it Minimax principles for lower semicontinuous functions and applications to nonlinear boundary value problems}, 
Ann. Inst. H. Poincar\'e, Analyse non lin\'eaire {\bf 3} (1986), 77--109.

\bibitem{SSz}
A.\ Szulkin, M.\ Squassina,  
{\it Multiple solutions to logarithmic Schr\"odinger equations with periodic potential},
Calc. Var. Partial Differential Equations, to appear.

\bibitem{zlosh}
K.G.\ Zloshchastiev,
{\it Logarithmic nonlinearity in theories of quantum gravity:
origin of time and observational consequences},
Grav. Cosmol. {\bf 16} (2010), 288--297.

\end{thebibliography}
\end{document}